  \newtheorem{theorem}{Theorem}[section]
  \newtheorem*{ack}{Acknowledgments}
  \newtheorem{lemma}[theorem]{Lemma}
  \newtheorem{corollary}[theorem]{Corollary}
  \theoremstyle{definition}
  \newtheorem{definition}[theorem]{Definition}
  \newtheorem*{remark}{Remark}
  \numberwithin{equation}{section}
  \newcommand{\N}{{\mathbb N}}
  \newcommand{\Z}{{\mathbb Z}}
  \newcommand{\R}{{\mathbb R}}
  \newcommand{\C}{{\mathbb C}}
  \newcommand{\T}{{\mathbb T}}
  \newcommand{\leb}{{\operatorname{Leb}}}
  \newcommand{\dd}{{\mathrm d}}
  \newcommand{\tr}{\operatorname{tr}}
  \renewcommand{\Re}{\operatorname{Re}}
  \newcommand{\dom}{\text{\rm{Dom}}}
\title{Ballistic Transport for Limit-periodic Schr\"odinger Operators in One Dimension}
\author[G. Young]{Giorgio Young}
\address{Department of Mathematics, Rice University, Houston, TX~77005, USA}
\email{gfy@rice.edu}
\thanks{G.Y.\ was supported in part by NSF grant DMS--1745670.}
\begin{document}
\begin{abstract}
	In this paper, we consider the transport properties of the class of limit-periodic continuum Schr\"odinger operators whose potentials are approximated exponentially quickly by a sequence of periodic functions. For such an operator $H$, and $X_H(t)$ the Heisenberg evolution of the position operator, we show the limit of $\frac{1}{t}X_H(t)\psi$ as $t\to\infty$ exists and is nonzero for $\psi\ne 0$ belonging to a dense subspace of initial states which are sufficiently regular and of suitably rapid decay. This is viewed as a particularly strong form of ballistic transport, and this is the first time it has been proven in a continuum almost periodic non-periodic setting. In particular, this statement implies that for the initial states considered, the second moment grows quadratically in time. 
\end{abstract}

\maketitle

\section{Introduction}
	For a bounded function $V:\R \to \R$, we consider the Schr\"odinger operator $H$ defined by the expression 
	\[H:=-\frac{\dd^2 }{\dd x^2}+V\] 
	with domain $\dom(H)=H^2(\R)$, where $H^2(\R)$ is the Sobolev space of twice weakly differentiable functions with second derivative in $L^2(\R)$. $H$ is a bounded-operator perturbation of $H_0:=-\frac{\dd^2}{\dd x^2}$, $\dom(H_0)=H^2(\R)$,
and is thus an unbounded self-adjoint operator on $L^2(\R)$.

  In this paper, we study the quantum evolution corresponding to $H$ described by the Schr\"odinger equation:
  \begin{align}\label{eq:Schrod}
  i\frac{\partial \psi}{\partial t}=H\psi,\quad \psi(0)=\psi\in H^2(\R).
  \end{align}
   Since $H$ is self adjoint, one may use the spectral theorem to define $e^{-itH}$, which forms a strongly continuous one-parameter unitary group, and find the solution $e^{-itH}\psi$ to \eqref{eq:Schrod}. From this expression and the associated machinery, one can describe the dynamics of the equation.

We will be particularly interested in the transport properties of solutions to \eqref{eq:Schrod} for $H$ in a particular class of limit-periodic operators. To be more precise, we will need to introduce some objects used to describe transport. The position operator $X$ is the  unbounded operator defined as
  \begin{align*}
    (X\psi)(x)=x\psi(x),\quad   \dom(X):=\{ \psi\in L^2(\R): x\psi\in L^2(\R)\},
  \end{align*}
	and for a Schr\"odinger operator $H$, the Heisenberg evolution of the position operator $X$ is defined as
 \[X_H(t):=e^{itH}Xe^{-itH}.\]
  For $p>0$ and suitably localized states $\varphi$, the moments are defined by
  \[
  |X|_\varphi^p(t):=\||X|^{p/2}e^{-itH}\varphi \|^2=\int_\R|x|^{p}|(e^{-itH}\varphi)(x)|^2\dd x,
  \]
and the transport exponents are 
  \begin{align*}
	  \beta_\varphi^+(p):=\limsup_{t\to \infty}\frac{\log|X|_\varphi^p(t)}{p\log t},\;\;\beta_\varphi^-(p):=\liminf_{t		\to \infty}\frac{\log|X|_\varphi^p(t)}{p\log t}, 
  \end{align*}
 measuring the growth of the moments in time on a power law scale. 
 
  In the continuum setting, ballistic transport is often described as quadratic growth of the second moment for a dense set of suitably localized and regular initial states.\footnote{Ballistic upper bounds $\beta_{\varphi}^+(2)\leq 1$ are available in great generality in the continuum setting \cite{RadinSimon}.} For the operators we consider, this will follow from our main theorem. Namely, we find a $Q\psi\in L^2(\R)$ with
 \begin{align}\label{eq:10920}
\lim_{t\to\infty}\frac1tX_H(t)\psi=Q\psi,\qquad Q\psi\ne 0,
\end{align}
for $\psi\ne 0$ in a subset of $L^2(\R)$ which contains the Schwartz functions.
This condition is the strong form of ballistic transport referenced in the abstract; indeed, from \eqref{eq:10920}, one has
\begin{align*}
\lim_{t\to\infty}\frac1{t^2}|X|_{\psi}^2(t)=\|Q\psi\|^2>0,
\end{align*}
which in turn implies $\beta_{\psi}^\pm(2)=1$ for these $\psi$.
 
There is a relationship between transport properties and the spectral theory for Schr\"odinger operators. 
 More qualitatively, the connection between spectral type and transport is seen through the RAGE theorem \cite{Damanik,ReedSimon3}. More quantitative results illustrating this connection are also available. For example, it is known that Schr\"odinger operators with pure point spectrum satisfy 
 \begin{align}\label{eq:10922}
 \lim_{t\to\infty}\frac1{t^2}|X|_\varphi^2(t)=0
 \end{align}
for all $\varphi$ of compact support \cite{Simon2}, which is described there as the absence of ballistic motion. There are also the works of Guarneri, Combes and Last \cite{Guarneri1,Guarneri2,Combes,Last}. In particular, the Guarneri-Combes-Last theorem \cite{Last} implies that, in one dimension and in the presence of absolutely continuous spectrum, there is transport in a quantitative sense. Defining the spectral measure corresponding to a state $\varphi\in L^2(\R)$ and a self adjoint operator $H$ to be the unique positive finite Borel measure $\mu_{\varphi}$ on $\R$ such that for all $z\in\C\setminus \R$,
  \begin{align*}
    \langle \varphi,(H-z)^{-1}\varphi\rangle=\int_{\R}\frac{1}{x-z}\dd\mu_{\varphi}(x),
  \end{align*}
  the Guarneri-Combes-Last theorem implies that in one dimension, and for states $\varphi$ whose spectral measures have a nonzero absolutely continuous part, an averaged ballistic lower bound holds: there exists a $\varphi$ dependent constant $C$ such that  
  \begin{align}\label{eq:0909}
	\frac1T\int_0^T|X|_{\varphi}^2(t)\dd t\geq CT^2.
  \end{align}
Since $\beta^+(2)$ is bounded from below by its time averaged analog, $\mu_\varphi$ having a nonzero absolutely continuous part is enough to imply $\beta_{\varphi}^+(2)=1$. However, it is not yet known whether in one dimension $\mu_\varphi$ having nonzero absolutely continuous part is enough to ensure $\beta_{\varphi}^-(2)=1$. We note this is known to be false in higher dimensions \cite{BellSchulz,KiselevLast}, where work often seeks to show averaged ballistic lower bounds of the form \eqref{eq:0909}, for example  \cite{Stolzetall} in the two dimensional almost periodic setting. Typically, showing the quantitative characterizations of transport beyond the scope of the Guarneri-Combes-Last theorem requires model dependent methods. To better contextualize our results, we will focus our introductory discussion on non-time averaged results in one-dimensional, almost periodic models. 

 Given the relationship described above and the diverse spectral properties of almost periodic operators \cite{Simon}, it is perhaps not surprising that the transport properties for these operators is similarly varied. In \cite{AschKnauf,DLY}, strong ballistic transport is shown for periodic continuum Schr\"odinger operators and (block) Jacobi matrices respectively, both of which are well known to have absolutely continuous spectrum. Specifically, in \cite{AschKnauf}, \eqref{eq:10920} is shown for a dense subspace of suitably regular and localized states $\psi$. Meanwhile in \cite{DLY}, the corresponding notion of strong ballistic transport in the discrete setting is shown to hold. Namely, they prove strong resolvent convergence of the operators $\frac1tX_H(t)$ to an operator with trivial kernel, and use this to show $\beta_\varphi^\pm(p)=1$ for $p>0$ and another dense set of $\varphi$. 
 
 On the other hand, in \cite{BGS, Bourgain,Sinai,FSW, AJ, BG}, certain quasi-periodic operators are shown to exhibit pure point spectrum, and in fact Anderson localization. As we noted above, pure point spectrum implies the absence of ballistic motion as described by \eqref{eq:10922}. See also the results \cite{BJ} which imply strong dynamical localization for the almost Mathieu operator under a certain regime. In the discrete limit-periodic setting, work of Damanik and Gorodetski \cite{DamanikGorodetski} found a set of limit-periodic operators with pure-point spectrum which is dense in the space of all limit-periodic operators. Further work of P\"oschel \cite{Poschel} and Damanik and Gan \cite{DamanikGan1,DamanikGan2} gives examples of limit-periodic operators which exhibit an extremely strong form of Anderson localization; in particular, it is known that this notion of localization implies strong dynamical localization.

 However, ballistic transport is often hoped for in classes of one dimensional almost periodic operators that are known to have absolutely continuous spectrum. There have been a number of recent results showing ballistic transport for such operators, which we summarize briefly. The paper \cite{Fillman} is the main inspiration for our work. In that paper, Fillman shows that limit-periodic Jacobi matrices which are exponentially quickly approximated exhibit strong ballistic transport. There have also been results in the setting of quasi-periodic operators, particularly those with ``small,'' analytic potentials, which often have purely absolutely continuous spectrum. In Kachkovskiy's work in the discrete setting \cite{Kachkovskiy}, what is described there as ``frequency-averaged strong ballistic transport'' is shown for a large class of these operators; in particular, this implies strong ballistic transport along a subsequence of time scales and for almost every frequency. The works of Zhao \cite{Zhao,Zhaocont}, in both the discrete and continuous setting show that for diophantine frequencies and small enough potentials, the second moment grows quadratically, a weaker notion of transport than that of \cite{Kachkovskiy} which, however, holds without passing to a subsequence of times and a full measure set.  
 
 Subsequently, a paper of Zhang and Zhao \cite{ZhaoZhang} explicitly linked the values of the transport exponents and absolutely continuous spectrum in the setting of discrete one frequency quasi-periodic operators. They find $\beta_\varphi^\pm(p)=1$ for suitably localized states $\varphi\ne 0$ and almost every frequency whenever the operator has purely absolutely continuous spectrum. Finally, there is the recent work of Kachkovskiy and Ge \cite{KachkovskiyGe}, which proves a form of ballistic transport not quite as strong as the strong ballistic transport referenced above, but sufficient to deduce that the second moment grows at least quadratically. Their results hold for a broad class of discrete quasi-periodic operators.

 This paper contributes to the limit-periodic literature. We find strong ballistic transport for a class of continuum limit-periodic operators known to have absolutely continuous spectrum. These results may be viewed as an extension of the main result of \cite{Fillman} to the continuum.  To introduce our results, we define the class of limit-periodic operators considered. A limit-periodic operator is one with a potential that is a limit-periodic function, i.e. there is a sequence of continuous $p_n$-periodic functions $V_n$ with
  \begin{align*}
  \lim_{n\to \infty}\|V-V_n\|_\infty=0,
  \end{align*}
  where $\| V\|_\infty$ denotes the essential supremum of the function $V$.
  It is a standard fact for limit-periodic functions that the potentials can be chosen so that $p_{n+1}/p_n\in \N$.  We define those operators whose potentials are approximated exponentially quickly by the above sequence, or are in ``exponential class $\eta$,'' as follows.
  \begin{definition}
  Let $\eta>0$. A limit-periodic Schr\"odinger operator $H$ is said to be of exponential class $\eta$ if there is a sequence of continuous $p_n$-periodic functions $V_n$ such that
  \[
  \lim_{n\to \infty}e^{\eta p_{n+1}}\|V_n-V\|_\infty=0
  \]
  where $p_{n+1}/p_{n}\in \N$ and $p_n\ne p_{n+1}$ for any $n\in \N$.
  We say $H\in EC(\infty)$ if $H$ is of exponential class $\eta$ for any $\eta>0$.
  \end{definition}

The class $EC(\infty)$ is well-studied, beginning with work of  Pastur and Tkachenko \cite{PasturTkachenko1} and Chulaevskii \cite{Chulaevskii}. Despite the varied spectral properties of limit-periodic operators generally, this class does behave similarly to periodic operators in important ways. In particular, like periodic operators, members of this class are known to have absolutely continuous spectrum \cite{Chulaevskii, AvronSimon,PasturTkachenko1,PasturTkachenko2, PasFig}. However, the spectrum of operators in $EC(\infty)$ is also generically nowhere dense and perfect, i.e. a Cantor type set \cite{Chulaevskii, AvronSimon,PasturTkachenko1,PasturTkachenko2,PasFig,Simon}. As we have noted, the former of these properties is of particular interest given the thesis of this paper.

We are now ready to state our results precisely. We will make use of the following subspace of $L^2$, defined for a given $s>0$ as 
\begin{align*}
\mathcal{D}_s:=\{\psi\in H^2(\R)\cap \dom(|X|^s): \psi''\in \dom(|X|^s)\}
\end{align*}  
where $\dom(|X|^s):=\{ \psi\in L^2(\R):|x|^s\psi\in L^2(\R)\}$. The subspace $\mathcal{D}_s$ will allow us to optimize certain statements in terms of the regularity and decay of the initial states considered. Of course, $\mathcal{D}_s$ contains the Schwartz functions, and is dense in $L^2(\R)$. We prove the following.

\begin{theorem}\label{thm:main}
For each $R>0$, there is a constant $\eta=\eta(R)$ such that if $H\in EC(\eta)$ 	and $\|V\|_\infty\leq R$, then for each $\psi\in \mathcal{D}_s$ with $s>2$, there is a $Q\psi\in L^2(\R)$ so that \eqref{eq:10920} holds. In particular, if $H\in EC(\infty)$, \eqref{eq:10920}, holds for $\psi\in\mathcal{D}_s$, $s>2$.
  \end{theorem}

	Before proceeding to the proofs of the above, we make some comments on the methodology and organization of the paper. The general strategy is based on Fillman's work in the discrete setting \cite{Fillman}. However, in addition to the technical difficulties that arise naturally when working with unbounded operators, new arguments and techniques are required to deal with new obstacles in key places. For example, as in \cite{Fillman}, finding a quantitative version of the convergence result that yields strong ballistic transport for periodic operators is critical to our work; this is our Theorem~\ref{thm:Quantper}. The proof of this theorem requires an estimate on a normed difference given by an integral over quasimomenta $k$ in the Brillouin zone. This estimate is found by first partitioning the Brillouin zone into two time dependent sets: one ``good'' set of $k$ on which we have decay in time for a quantity that appears in the integrand, and a ``bad'' set of $k$ that must have Lebesgue measure that decays in time at an explicit rate, and then finding an estimate on the integrand that is uniform in both $k$ and the operator. In the discrete setting, this uniform estimate is found using the Hilbert-Schmidt norm of the operators involved, which are finite matrices for each $k$. In the continuum, the analogous operators are unbounded, and this estimate is significantly more delicate. The estimate on the measure of the ``bad'' set in \cite{Fillman} takes advantage of an explicit finite product formula for a derivative. While in the continuum setting, there is a similar product formula, it is generally over infinitely many terms. To circumvent difficulties that arise in suitably controlling this product, our proof is by different methods.
	
	Section 2 contains some estimates on the Hill discriminant of a periodic operator, culminating in the estimate on the Lebesgue measure of the ``bad'' set of quasimomenta described above. The quantitative version of the convergence result of \cite{AschKnauf} is proved in Section 3. The control given by this theorem will allow us to extract the $Q\psi$ of Theorem~\ref{thm:main} for a limit-periodic operator as a limit of the associated quantity for the periodic approximants, and show that $Q\psi\ne0$; this argument is found in Section 4, along with some propagation estimates it requires. The appendix contains estimates on the expectation of powers of the position operator with careful control of the constants appearing in the bounds.
	
	 \begin{ack}
  We would like to thank the anonymous referee for their careful reading of this manuscript. Their insightful comments led to a modification in the presentation of the Floquet transform, and subsequent simplifications in the proofs in the third section of this paper. 
  \end{ack}
	
\section{Periodic operators and Hill discriminant estimates}

  We introduce some of the Floquet-Bloch theory for periodic operators, for which \cite{ReedSimon4,Kuchment,AvronSimon} all serve as helpful references. Fundamental to this theory is the Floquet transform and direct integral decomposition. We take $\Gamma^*=\frac{2\pi}{p}\Z$ to be the dual lattice corresponding to the lattice $\Gamma=p\Z$, and define the tori 
\begin{align*}
  \T=\R/\Gamma, \quad \T^*=\R/\Gamma^*.
  \end{align*}
  We will refer to $k\in \T^*$ as the quasimomentum, and when needed, we will fix the Brillouin zone $\mathcal{B}:=\left(-\frac{\pi}{p},\frac{\pi}{p} \right]$ as a fundamental domain for $\Gamma^*$. We define the constant fiber direct integral
  \[
  L^2\left(\T^*,\frac{\dd k}{|\T^*|}; L^2(\T,\dd q)\right)=\left\{ f:\T^*\to L^2(\T,\dd q); \int_{\T^*}\|f\|_{L^2(\T,\dd q)}^2\frac{\dd k}{|\T^*|}<\infty\right\}
  \]
  as well as the Floquet transform:
  \begin{align*}
  U:&L^2(\R)\to L^2\left(\T^*,\frac{\dd k}{|\T^*|}; \;L^2(\T,\dd q)\right)=:\int_{\T^*}^\oplus L^2(\T,\dd q)\frac{\dd k}{|\T^*|},
  \end{align*}
  given by the $\int_{\T^*}^\oplus L^2(\T,\dd q)\frac{\dd k}{|\T^*|}$ convergent series
  \begin{align}\label{eq:fourier}
  U\psi(k,q)=\sum_{\ell\in\Z}e^{-ik(q+p\ell)}\psi(q+p\ell).
  \end{align}
   The Floquet transform is  unitary, with inverse $U^{-1}=U^*$ given by
   \begin{align}
   U^*f(q+p\ell)=\int_{\T^*}e^{ik(q+p\ell)}f(k,q)\frac{\dd k}{|\T^*|}
   \end{align}
   for $\ell\in \Z$. As our notation suggests, $\int_{\T^*}^\oplus L^2(\T,\dd q)\frac{\dd k}{|\T^*|}$ is canonically isomorphic to $L^2(\T^*\times \T)$. 

  For a $p$-periodic $V\in L^\infty(\R)$, the Floquet transform conjugates the Schr\"odinger operator $H=-\frac{\dd^2}{\dd x^2}+V$ with domain $H^2(\R)$ to a direct integral of the operators 
  \begin{align*}
  H(k)=(D+k)^2+V,\quad \dom(H(k))=H^2(\T)
  \end{align*} 
  where $D=-i\frac{\dd}{\dd q}$. Symbolically,
  \begin{align}
    UHU^*=\int_{\T^*}^{\oplus}H(k)\frac{\dd k}{|\T^*|}.
  \end{align}
  Each of the $H(k)$ is a bounded-operator perturbation of the operator 
\begin{align*}  
  H_0(k):=(D+k)^2,\quad\dom(H_0(k))= H^2(\T), 
  \end{align*}
  and thus has a compact resolvent. Let $E_n(k)$ be the eigenvalues of $H(k)$ listed in ascending order,
  \[E_n(k)\leq E_{n+1}(k),\;k\in\T^*,n\in\N,\]
  with strict inequality for $k\not\in\frac{\pi}{p}\Z$ when the eigenvalues are simple, see, for example \cite{AvronSimon}.
  
  Then, for $k\not\in \frac{\pi}{p}\Z$, $H(k)$ admits the eigenfunction expansion
  \begin{align*}
    H(k)=\sum_{n=1}^\infty E_n(k)P_n(k),
  \end{align*}
  where $P_n(k)$ are rank-one orthogonal projections onto the associated eigenspaces.\footnote{There is an eigenfunction expansion for all $k$, but due to possible degeneracy in the $E_n(k)$ for $k\in \frac{\pi}{p}\Z$, the projections could be of rank-two at these points, requiring a re-indexing for these $k$. Since our statements involving the expansion concern integrals over $\T^*$, we may avoid the issue by introducing the formula away from these $k$.} For $k\in \mathcal{B}$, $H(k)$ and $H(-k)$ are antiunitarily equivalent, so that $E_n(k)=E_n(-k)$. $E_n(k)$ is analytic on $\left(0,\frac{\pi}{p}\right)$ and continuous at the endpoints, with $(-1)^{n+1}\frac{\dd E_n}{\dd k}>0$ on this interval.

  Using the theory of direct integrals and the properties of the $E_n(k)$ outlined above, the spectrum of $H$ may be seen to have a ``band" structure:
  \begin{align*}
    \sigma(H)=\bigcup_{j=0}^\infty[\lambda_{2j},\lambda_{2j+1}]
  \end{align*}
  where $\lambda_{2j}<\lambda_{2j+1}$ and the intervals $[\lambda_{2j},\lambda_{2j+1}]$ are referred to as bands. These bands are parametrized by the eigenvalues $E_n(k)$:
  \begin{align*}
    \begin{cases}
      \lambda_{2j}=E_{j+1}(0),\;\lambda_{2j+1}=E_{j+1}\left(\frac{\pi}{p} \right)& j\;\text{even}\\
      \lambda_{2j}=E_{j+1}\left(\frac{\pi}{p}\right),\;\lambda_{2j+1}=E_{j+1}\left(0 \right)& j\;\text{odd}.
    \end{cases}
  \end{align*}

  In the one dimensional setting, we have access to the discriminant, an important tool in the theory. We introduce the operator 
  \[
  \tilde H(k)=-\frac{\dd^2}{\dd x^2}+V,\quad \dom(\tilde H(k))=\{ \psi\in H^2([0,p]): \; \psi(0)=e^{-ikp}\psi(p), \; \psi'(0)=e^{-ikp}\psi'(p)\}.
  \]
We note that conjugating $H(k)$ by the multiplication operator $\psi(\cdot)\mapsto e^{-ik\cdot}\psi(\cdot)$ yields $\tilde H(k)$, so that the two operators are unitarily equivalent. Let $u_1(z,x)$ and $u_2(z,x)$ the Neumann and Dirichlet solutions to the differential equation $-u''+Vu=zu$ associated to $\tilde H(k)$; $u_1'(z,0)=u_2(z,0)=0$ and $u_1(z,0)=u_2'(z,0)=1$. Then, the monodromy matrix is defined by
  \begin{align*}
    M(z)=\begin{pmatrix}
      u_1(z,p)&u_2(z,p)\\
      u_1'(z,p)&u_2'(z,p)
    \end{pmatrix}
  \end{align*}
  and the discriminant is defined as
  \begin{align*}
    \Delta(z)=\tr M(z)=u_1(z,p)+u_2'(z,p).
  \end{align*}
  Since $M(z)$ is the one step transfer matrix for the differential equation associated to $\tilde H(k)$, $E$ is an eigenvalue for $\tilde H(k)$, and so $H(k)$, if and only if $e^{ikp}$ is an eigenvalue of the matrix $M(E)$. Since $\det(M(z))=1$, this yields
  \begin{align}\label{eq:dk}
    \Delta(E_n(k))=2\cos(pk),\;k\in\left[0,\frac{\pi}{p}\right].
  \end{align}

Our results in this section will rely on an upper bound on the derivative of the discriminant on the spectrum, established in \cite{FillmanLukic}[Lemma 2.1]. As noted in that paper, such an estimate is likely well known, although we also could not find a reference. Since we require a cruder estimate with more assumptions on the potential than they do, we record the version of their statement which suffices for our work.

  \begin{lemma}[\cite{FillmanLukic}, Lemma 2.1]\label{lem:FillLuk}
Let $V$ a $p$ periodic potential with $\| V\|_\infty\leq R$. Then the corresponding discriminant $\Delta$ satisfies
  \begin{align}\label{eq:discriminant}
  |\Delta'(\lambda)|\leq C_2p^2\exp(2C_2R^{1/2}p)
  \end{align}
  for all $\lambda\in \sigma(H)$ and where $C_2$ is a constant independent of $p$, $V$, and $R$.
  \end{lemma}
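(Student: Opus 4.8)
The plan is to deduce \eqref{eq:discriminant} from \cite{FillmanLukic}, Lemma~2.1, where the corresponding estimate is established under weaker hypotheses on $V$; specializing to $\|V\|_\infty\le R$ and absorbing universal constants yields the stated form. I will sketch the argument in the notation of this section.

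The starting point is an integral representation of $\Delta'$. Let $T(z,x)$ be the transfer matrix of $-u''+Vu=zu$ from $0$ to $x$, so that $T(z,p)=M(z)$, $\det T(z,x)\equiv1$, and $T$ has first column $(u_1(z,x),u_1'(z,x))^{\mathsf T}$ and second column $(u_2(z,x),u_2'(z,x))^{\mathsf T}$. Differentiating $\partial_x T=A(x,z)T$ in $z$, where $\partial_z A=\bigl(\begin{smallmatrix}0&0\\-1&0\end{smallmatrix}\bigr)$, and using variation of parameters (with $\partial_z T(z,0)=0$) gives $\partial_z M(z)=M(z)\int_0^p T(z,x)^{-1}(\partial_z A)T(z,x)\,\dd x$, and taking traces yields
\begin{align*}
\Delta'(z)=\int_0^p\Bigl[\bigl(u_1(z,p)-u_2'(z,p)\bigr)u_1(z,x)u_2(z,x)-u_2(z,p)\,u_1(z,x)^2+u_1'(z,p)\,u_2(z,x)^2\Bigr]\dd x,
\end{align*}
an identity valid for all $z$ (no band--edge analyticity is used; one checks directly that it reproduces $\Delta'(z)=-p\sin(p\sqrt z)/\sqrt z$ when $V\equiv0$). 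Thus it suffices to bound $u_1,u_1',u_2,u_2'$ on $[0,p]$, together with their values at $x=p$, when $z=\lambda\in\sigma(H)$.

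This is where the hypotheses enter. Since $\sigma(H)$ is contained in $[m,\infty)$, $m$ the essential infimum of $V$, one has $\lambda\ge m\ge-R$, so $V(x)-\lambda\le 2R$ for a.e.\ $x$. When $\lambda\ge\max(1,R)$ I would compare $u_1,u_2$ with the free solutions $\cos(\sqrt\lambda\,x)$ and $\sin(\sqrt\lambda\,x)/\sqrt\lambda$ via the Volterra integral equation; Gr\"onwall's inequality then gives $|u_1(\lambda,x)|\le\exp(Rp/\sqrt\lambda)\le\exp(R^{1/2}p)$, $|u_2(\lambda,x)|\le\lambda^{-1/2}\exp(R^{1/2}p)$, $|u_1'(\lambda,x)|\le\sqrt\lambda+Rp\exp(R^{1/2}p)$, $|u_2'(\lambda,x)|\le1+Rp\exp(R^{1/2}p)$, and the powers of $\lambda^{\pm1/2}$ cancel when these are substituted into the three terms above, just as they do for $V\equiv0$. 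When $\lambda\in[-R,\max(1,R)]$, a plain Gr\"onwall bound on $[0,p]$ only yields $\exp(CRp)$; to recover the correct rate one argues instead with a weighted energy (a modified Pr\"ufer transformation), recasting $-u''+Vu=\lambda u$ as a Schr\"odinger equation at energy $2R$ for the nonnegative bounded potential $V-\lambda+2R\le 4R$, and controlling $u(x)^2+u'(x)^2/w$ for a weight $w\asymp\max(1,\lambda_+,R)$, using that this quantity can increase only on the classically forbidden set, where the growth rate is at most $\sqrt{2R}$. Collecting these estimates and substituting into the integral formula — the factors $w^{1/2}$ cancelling term by term exactly as for $V\equiv0$, which is what produces the polynomial prefactor $p^2$ rather than a further exponential — gives $|\Delta'(\lambda)|\le C_2p^2\exp(2C_2R^{1/2}p)$.

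The main obstacle is this last estimate: a bound on $u_1,u_2$ that is uniform over $\sigma(H)$ with the correct exponential rate $R^{1/2}$ (not $R$) and the correct polynomial prefactor. This forces one past a naive Gr\"onwall argument into the weighted/Pr\"ufer bookkeeping above, which is somewhat delicate for potentials that are merely $L^\infty$ — precisely the content of \cite{FillmanLukic}, Lemma~2.1, which we therefore quote rather than reprove.
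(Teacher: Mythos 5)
The paper does not prove this lemma: it states it as a quoted result, citing \cite{FillmanLukic}, Lemma~2.1 directly (remarking only that it uses a cruder version of their bound than is actually proved there). Your proposal ultimately does the same — it defers to that citation — so the two are in agreement; the additional sketch you give (the integral representation for $\Delta'$ obtained by differentiating the transfer-matrix ODE in $z$, and the Volterra/energy bounds on $u_1,u_2$ yielding the exponential rate $R^{1/2}$ rather than $R$) is a reasonable account of the mechanism behind the cited lemma, but there is no argument in the paper to compare it against.
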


  We will also need the following lemma, consisting of two estimates. The first provides a lower bound on the group velocity in a band, and the second is a closely related lower bound on the difference of an eigenvalue $E_n$ evaluated at different quasimomenta. Since both estimates will be useful in what follows, we state them separately.

  \begin{lemma}\label{lem:velocitybound}
  Let $V$ a $p$ periodic potential with $\| V\|_\infty\leq R$. For $k\in \mathcal{B}$ we have
  \begin{align}\label{eq:velocitybound}
  \left|\frac{\dd E_m}{\dd k}\right|\geq \frac{2|\sin(pk)|}{C_2p}\exp\left(-2C_2R^{1/2}p. \right)
  \end{align}
  and for $k_1,k_2\in \left[0,\frac\pi{p}\right]$,
  \begin{align}\label{eq:Holderestimate}
  |E_m(k_2)-E_m(k_1)|\geq \frac{\exp\left(-2C_2R^{1/2}p\right)}{\pi C_2}(k_2-k_1)^2.
  \end{align}
  \end{lemma}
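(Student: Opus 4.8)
The plan is to derive both estimates from the relation \eqref{eq:dk}, $\Delta(E_m(k)) = 2\cos(pk)$, by implicit differentiation together with the upper bound on $|\Delta'|$ from Lemma~\ref{lem:FillLuk}. First I would differentiate \eqref{eq:dk} in $k$ on the open interval $\left(0,\frac{\pi}{p}\right)$, where $E_m$ is analytic and $\frac{\dd E_m}{\dd k}\neq 0$, to obtain
\begin{align*}
\Delta'(E_m(k))\,\frac{\dd E_m}{\dd k} = -2p\sin(pk).
\end{align*}
Solving for the group velocity and applying \eqref{eq:discriminant} with $\lambda = E_m(k) \in \sigma(H)$ gives
\begin{align*}
\left|\frac{\dd E_m}{\dd k}\right| = \frac{2p|\sin(pk)|}{|\Delta'(E_m(k))|} \geq \frac{2p|\sin(pk)|}{C_2 p^2 \exp(2C_2 R^{1/2} p)} = \frac{2|\sin(pk)|}{C_2 p}\exp\left(-2C_2 R^{1/2} p\right),
\end{align*}
which is \eqref{eq:velocitybound}. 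The endpoints $k=0, \pi/p$ are handled by continuity of $\frac{\dd E_m}{\dd k}$ there, and the inequality is trivially true at those points since the right side vanishes.

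For the second estimate, I would integrate the lower bound on the group velocity. Fix $k_1 < k_2$ in $\mathcal{B}$; by monotonicity of $E_m$ on the relevant subinterval (the sign of $\frac{\dd E_m}{\dd k}$ is constant, namely $(-1)^{m+1}$, on $\left(0,\frac{\pi}{p}\right)$), we have
\begin{align*}
|E_m(k_2) - E_m(k_1)| = \left|\int_{k_1}^{k_2} \frac{\dd E_m}{\dd k}\,\dd k\right| = \int_{k_1}^{k_2} \left|\frac{\dd E_m}{\dd k}\right|\,\dd k \geq \frac{2\exp\left(-2C_2 R^{1/2} p\right)}{C_2 p}\int_{k_1}^{k_2} |\sin(pk)|\,\dd k.
\end{align*}
The remaining task is a clean lower bound on $\int_{k_1}^{k_2}|\sin(pk)|\,\dd k$ in terms of $(k_2-k_1)^2$, uniform over $k_1,k_2\in\mathcal{B}$. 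Using the elementary inequality $|\sin\theta| \geq \frac{2}{\pi}\operatorname{dist}(\theta, \pi\Z)$ and the fact that on the Voronoi cell $pk$ ranges over $(-\pi,\pi]$ so that $\operatorname{dist}(pk,\pi\Z)$ is a concave tent function, one shows $\int_{k_1}^{k_2}|\sin(pk)|\,\dd k \geq \frac{p}{\pi}(k_2-k_1)^2$ (the worst case being when the interval is centered at a zero of $\sin(pk)$); combining gives the factor $\frac{2}{\pi C_2}$ in \eqref{eq:Holderestimate}. I should be slightly careful about whether $[k_1,k_2]$ can straddle the point $k=0$, where the two monotone branches meet; but $|E_m(k_2)-E_m(k_1)|$ is then at least the contribution from either side, and the tent-function estimate still applies to $\int |\sin(pk)|$, so the bound persists.

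I do not anticipate a serious obstacle here — both statements are essentially calculus consequences of \eqref{eq:dk} and Lemma~\ref{lem:FillLuk}. The only point requiring a little care is the elementary integral estimate $\int_{k_1}^{k_2}|\sin(pk)|\,\dd k \geq \frac{p}{\pi}(k_2-k_1)^2$ and getting the constant right, together with confirming it holds uniformly regardless of where $[k_1,k_2]$ sits relative to the zeros of $\sin(pk)$ inside $\mathcal{B}$; this is where the quadratic (rather than linear) dependence on $k_2-k_1$ comes from, reflecting that the group velocity degenerates like a square root at band edges.
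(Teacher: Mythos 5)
Your overall approach — implicit differentiation of $\Delta(E_m(k))=2\cos(pk)$, applying the $|\Delta'|$ bound from Lemma~\ref{lem:FillLuk}, and integrating the resulting group-velocity lower bound — is exactly what the paper does, and the derivation of \eqref{eq:velocitybound} is correct. However, there are two concrete problems in the second half.

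First, the elementary integral estimate is off by a factor of $2$. You claim $\int_{k_1}^{k_2}|\sin(pk)|\,\dd k \geq \frac{p}{\pi}(k_2-k_1)^2$, which after the substitution $u=pk$ reads $\int_{u_1}^{u_2}|\sin u|\,\dd u \geq \frac1\pi(u_2-u_1)^2$. Your own identified worst case refutes this: take the interval $[-a,a]$ centered at a zero of $\sin$. Then $\int_{-a}^{a}|\sin u|\,\dd u = 2(1-\cos a)\approx a^2$ for small $a$, while $\frac1\pi(2a)^2 = \frac{4a^2}{\pi} > a^2$. The sharp constant obtainable from the tent-function bound $|\sin u|\geq\frac2\pi\,\mathrm{dist}(u,\pi\Z)$ is $\frac{1}{2\pi}$, giving $\int_{k_1}^{k_2}|\sin(pk)|\,\dd k\geq\frac{p}{2\pi}(k_2-k_1)^2$. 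Chasing this through your chain of inequalities produces exactly the paper's constant $\frac{1}{\pi C_2}$, not the $\frac{2}{\pi C_2}$ you state; the stronger version is simply false.

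Second, your dismissal of the straddling case is not right. When $k_1<0<k_2$, the derivative $\frac{\dd E_m}{\dd k}$ changes sign at $k=0$ (by the symmetry $E_m(k)=E_m(-k)$), so the key identity
\begin{align*}
|E_m(k_2)-E_m(k_1)| = \int_{k_1}^{k_2}\left|\frac{\dd E_m}{\dd k}\right|\dd k
\end{align*}
fails — the left side involves cancellation the right side does not. Concretely, with $k_1=-\epsilon$, $k_2=\epsilon$ one has $E_m(k_2)=E_m(k_1)$, so the left side of \eqref{eq:Holderestimate} is $0$ while the right side is positive: the estimate simply does not hold across $k=0$, and no amount of ``tent-function persistence'' rescues it. The paper's proof implicitly restricts to $0\leq k_1\leq k_2\leq\frac{\pi}{p}$ (this is what ``exploit the same symmetry to assume'' means there), which is the only case actually invoked downstream in Lemma~\ref{lem:DOSCont}, where $k$ is compared to the fixed endpoints $0$ and $\pi/p$. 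You should state this restriction rather than assert the literal statement holds on all of $\mathcal{B}$.
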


  \begin{proof}
   Differentiating \eqref{eq:dk} in $k$ yields
  \begin{align*}
  \frac{\dd E_m}{\dd k} =\frac{-2p\sin(pk)}{\Delta'(E_m(k))}
  \end{align*}
  and \eqref{eq:velocitybound} follows from the symmetry $E_m(k)=E_m(-k)$ and Lemma~\ref{lem:FillLuk}.

  To prove \eqref{eq:Holderestimate}, we assume without a loss of generality that $0\leq k_1\leq k_2\leq \frac{\pi}{p}$. Thus, since $(-1)^{m+1}\frac{\dd E_m}{\dd k}\geq 0$ for $0\leq k\leq \frac{\pi}{p}$,
  \begin{align*}
  |E_m(k_2)-E_m(k_1)|&=(-1)^{m+1}\int_{k_1}^{k_2}\frac{\dd E_m}{\dd k}\dd k\\
  &=\int_{k_1}^{k_2}(-1)^m\frac{2p\sin(pk)}{\Delta'(E_m(k))}\dd k\\
  &=\int_{k_1}^{k_2}\frac{2p\sin(pk)}{|\Delta'(E_m(k))|}\dd k
  \end{align*}
  since $(-1)^m\Delta'(E_m(k))\geq 0$. Thus, by Lemma~\ref{lem:FillLuk},
  \begin{align*}
    |E_m(k_2)-E_m(k_1)|&\geq \frac{\exp\left(-2C_2R^{1/2}p\right)}{C_2p^2}\int_{k_1}^{k_2}2p\sin(pk)\dd k.
  \end{align*}

  We now show that for $u_1,u_2\in[0,\pi]$, $\int_{u_1}^{u_2}\sin(u)\dd u\geq \frac1{2\pi}(u_2-u_1)^2$. For $u_1,u_2\in \left[0,\frac{\pi}{2}\right]$
  we have $\int_{u_1}^{u_2}\sin(u)\dd u\geq \frac1\pi(u_2-u_1)^2$ by integrating the inequality $\sin(u)\geq \frac{2}{\pi}u$; the same bound then holds for $u_1,u_2\in \left[\frac{\pi}{2}, \pi\right]$ by symmetry. Thus, for $0\leq u_1\leq \frac{\pi}{2}\leq  u_2\leq \pi$, we have
  \begin{align*}
  \int_{u_1}^{u_2}\sin(u)\dd u&\geq \frac{1}{\pi}\left(\frac{\pi}{2}-u_1 \right)^2+\frac1\pi\left(\frac{\pi}{2}-u_2\right)^2\geq \frac{1}{2\pi}(u_2-u_1)^2
  \end{align*}
  where the final inequality can be seen by setting $u_1=x+\frac{\pi}{2}$ and $u_2=y+\frac{\pi}{2}$, for $x,y\in \R$ and using $\frac{1}{2}(x+y)^2\geq 0$.

  Thus, by the change of variables $pk\mapsto k$,
  \begin{align*}
  2\int_{k_1}^{k_2}\sin(pk)p\dd k\geq \frac{p^2}{\pi}(k_2-k_1)^2,
\end{align*}
  and \eqref{eq:Holderestimate} follows.
  \end{proof}

  \begin{remark}
    Exploiting the relationship $\rho(E)=\left|\frac{\dd E_n}{\dd k}\right|^{-1}$, where $\rho(E)$ is the Radon-Nikodym derivative of the density of states measure with respect to Lebesgue measure, and $E=E_n(k)$ \cite{AvronSimon}, \eqref{eq:Holderestimate} provides a H\"older estimate for the density of states measure. Such estimates are mentioned in the literature \cite{AvronSimon,Moser}, however, critically for this paper, the estimate above provides a H\"older constant with explicit dependence on the period.
  \end{remark}

  The latter of these estimates allows us to find a bound on the measure of the set of quasimomenta for which the eigenvalues corresponding to different bands can be nearby. This will be used to prove the main result of the next section. 

  \begin{lemma}\label{lem:DOSCont}
  Define the set
  \[
  \mathcal B_\epsilon:=\{ k\in\mathcal{B}:\exists n,m\in \N,\; n\ne m\;\text{s.t.}\;\;|E_m(k)-E_n(k)|\leq \epsilon\}.
  \]
  
  Then, 
  \begin{align}\label{eq:DOSCont}
  \leb(\mathcal B_\epsilon)\leq 4\sqrt{C_2\pi}\exp\left(C_2R^{1/2}p\right)\cdot \sqrt{\epsilon}.
  \end{align}
  \end{lemma}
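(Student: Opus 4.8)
The plan is to reduce the statement about arbitrary pairs of distinct bands to one about \emph{consecutive} bands, and then to localize the resulting ``bad'' set near the two endpoints of the Brillouin zone using the quadratic lower bound \eqref{eq:Holderestimate}.

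First, since the eigenvalues are listed in increasing order, $E_m(k)\le E_{m+1}(k)\le\cdots$, so if $|E_m(k)-E_n(k)|\le\epsilon$ for some $m<n$ then in particular $0\le E_{m+1}(k)-E_m(k)\le E_n(k)-E_m(k)=|E_n(k)-E_m(k)|\le\epsilon$. Hence
\[
\mathcal B_\epsilon\subseteq\bigcup_{m=1}^\infty\{k\in\mathcal B:|E_m(k)-E_{m+1}(k)|\le\epsilon\},
\]
and it suffices to control each consecutive-band set. Next, for fixed $m$ I would identify the quasimomentum $k_m^\ast\in\{0,\pi/p\}$ at which bands $m-1$ and $m$ touch: from $(-1)^{n+1}\frac{\dd E_n}{\dd k}>0$ on $(0,\pi/p)$ one checks that $E_m$ attains its maximum over $[0,\pi/p]$ and $E_{m+1}$ attains its minimum over $[0,\pi/p]$ at the \emph{same} endpoint $k_m^\ast$ — namely $k_m^\ast=\pi/p$ for $m$ odd and $k_m^\ast=0$ for $m$ even — where $E_{m+1}(k_m^\ast)-E_m(k_m^\ast)=\lambda_{2m}-\lambda_{2m-1}\ge0$. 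Therefore, for $k\in[0,\pi/p]$,
\[
E_{m+1}(k)-E_m(k)\ \ge\ E_{m+1}(k)-E_{m+1}(k_m^\ast)\ =\ |E_{m+1}(k)-E_{m+1}(k_m^\ast)|\ \ge\ \frac{\exp(-2C_2R^{1/2}p)}{\pi C_2}\,(k-k_m^\ast)^2
\]
by \eqref{eq:Holderestimate}; the case $k\in(-\pi/p,0)$ is reduced to this one via the symmetry $E_n(k)=E_n(-k)$. Consequently $|E_m(k)-E_{m+1}(k)|\le\epsilon$ forces $k$ to lie within distance $\sqrt{\pi C_2}\,\exp(C_2R^{1/2}p)\sqrt\epsilon$ of $k_m^\ast$ (or of $-\pi/p$, for negative $k$ with $m$ odd).

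Finally — and this is the quantitative crux — the touching endpoint $k_m^\ast$ depends only on the \emph{parity} of $m$, so the union over all $m$ is contained in just two neighborhoods: one of radius $\sqrt{\pi C_2}\,\exp(C_2R^{1/2}p)\sqrt\epsilon$ about $0$, and one (splitting as the two half-intervals near $\pm\pi/p$) of total length $2\sqrt{\pi C_2}\,\exp(C_2R^{1/2}p)\sqrt\epsilon$ about $\pi/p$. Adding the lengths gives $\leb(\mathcal B_\epsilon)\le 4\sqrt{C_2\pi}\,\exp(C_2R^{1/2}p)\sqrt\epsilon$, which is \eqref{eq:DOSCont}; no smallness of $\epsilon$ is required since a union of intervals has measure at most the sum of their lengths (and when $\epsilon$ is large the right-hand side already exceeds $|\mathcal B|$).

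\textbf{Main obstacle.} The delicate bookkeeping is tracking, as a function of the parity convention, which endpoint of $\mathcal B$ is the touching point of bands $m-1$ and $m$, and handling the negative quasimomenta via the reflection symmetry; the key idea that makes the estimate nontrivial is recognizing that these per-$m$ neighborhoods collapse onto only two points of the Brillouin zone, so that the bound is independent of the (possibly infinite) number of bands rather than a divergent sum over $m$. One should also verify that discarding the nonnegative gap term $\lambda_{2m}-\lambda_{2m-1}$ is legitimate, which it is.
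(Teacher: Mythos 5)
Your proof is correct and follows essentially the same approach as the paper: reduce via the non-overlapping of bands and the symmetry $E_m(k)=E_m(-k)$ to localizing $k$ near the two endpoints $0,\pi/p$ (and their reflections) using the H\"older estimate \eqref{eq:Holderestimate}, and note that these neighborhoods do not depend on the band index so no infinite sum appears. The only cosmetic difference is that you first pass to consecutive-band gaps $E_{m+1}-E_m$ and single out the touching endpoint $k_m^\ast$, whereas the paper directly bounds $\mathcal B_\epsilon\cap[0,\pi/p]$ by the union of sets where $E_\ell(k)$ is within $\epsilon$ of one of its own endpoint values $E_\ell(0),E_\ell(\pi/p)$; both routes invoke \eqref{eq:Holderestimate} identically and produce the same two intervals of length $\sqrt{\pi C_2}\exp(C_2R^{1/2}p)\sqrt\epsilon$ on each half of the Brillouin zone.
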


  \begin{proof}
  Since for each $m$, and $k\in \left(0,\frac{\pi}{p} \right)$, $E_m(k)=E_m(-k)$, excluding $k=\pi/p$, $\mathcal B_\epsilon$ is symmetric, and
  \[
  \leb(\mathcal B_\epsilon)=2\leb\left(\mathcal B_\epsilon\cap \left[0,\frac{\pi}{p}\right] \right).
  \] 
 Using that bands do not overlap, we find
  \[
  \mathcal B_\epsilon\cap\left[0,\frac{\pi}{p}\right]\subseteq\bigcup_{\ell=1}^\infty B_\ell
  \]
  where we define 
  \[
  B_\ell:=\left\{k\in \left[0,\frac{\pi}{p}\right]:|E_\ell(k)-E_\ell(0)|\leq \epsilon\right\}\cup \left\{k\in \left[0,\frac{\pi}{p}\right]:\left|E_\ell(k)-E_\ell\left(\frac{\pi}{p}\right)\right|\leq \epsilon\right\}.
  \]

  By \eqref{eq:Holderestimate}, we have
    \[
  B_\ell\subseteq \left(\left[0,\sqrt{C_2\pi}\exp\left(C_2R^{1/2}p\right)\sqrt{\epsilon}\right]\cup \left[\frac{\pi}{p}-\sqrt{C_2\pi}\exp\left(C_2R^{1/2}p\right)\sqrt{\epsilon},\frac{\pi}{p}\right]\right)\cap \left[0,\frac{\pi}{p} \right]
  \]
  
  for each $\ell\in \N$.
Thus,
  \begin{align}
  \leb\left(B_\epsilon\cap\left[0,\frac{\pi}{p}\right]\right)\leq2\sqrt{C_2\pi}\exp\left(C_2R^{1/2}p\right)\cdot \sqrt{\epsilon},
  \end{align}
  from which \eqref{eq:DOSCont} follows.
  \end{proof}

\section{An estimate on the rate of convergence of $\frac{1}{t}X_H(t)\psi$ for periodic $H$}

  In this section, $H$ will denote a $p$ periodic Schr\"odinger operator with bounded potential on $L^2(\R)$. Theorem 2.3 of \cite{AschKnauf} shows that for these operators and $\psi\in H^1(\R)\cap \dom(X)$,
  \begin{align*}
    \lim_{t\to\infty}\frac1tX_H(t)\psi&=U^*\left(\int_{\T^*}^\oplus\sum_{n=1}^\infty P_n(k)(D+k)P_n(k)\frac{\dd k}{|\T^*|} \right)U\psi\\
    &=
    U^*\left(\int_{\T^*}^\oplus\sum_{n=1}^\infty \frac{\dd E_n(k)}{\dd k}P_n(k)\frac{\dd k}{|\T^*|}\right)U\psi.
  \end{align*}
  Critically for our purposes in this paper, the proof in \cite{AschKnauf} relies on uniform boundedness of the difference and uses density of truncated eigenfunction expansions to show convergence, and thus does not provide a quantitative estimate on the speed of convergence. The main result of this section is then to provide such an estimate.
  We will denote
  \begin{align}\label{eq:2023}
  Q_H\psi=U^*\left(\int_{\T^*}^\oplus\sum_{n=1}^\infty \frac{\dd E_n(k)}{\dd k}P_n(k)\frac{\dd k}{|\T^*|}\right)U\psi.
  \end{align}

  We establish a power law in time upper bound on the difference $\frac1tX_H(t)\psi-Q_H\psi$. Since we will be examining sequences of periodic operators with increasing periods and seeking information on their limit, it is important that we record precisely how the constants that appear in this estimate depend on the period. This bound will follow as a corollary of the below theorem. We denote by $D(r)=e^{irH}De^{-irH}$ the Heisenberg evolved momentum operator, and we remind the reader that we have defined $\mathcal{D}_s:=\{\psi\in H^2(\R)\cap \dom(|X|^s): \psi''\in \dom(|X|^s)\}$.

  \begin{theorem}\label{thm:Quantper}
  Let $R>0$, $s>2$  and $p\geq \pi$. If $H$ is a $p$-periodic Schr\"{o}dinger operator with $\|V\|_\infty\leq R$, and $\psi\in \mathcal{D}_s$, then
  \begin{align}\label{eq:strongestimate}
  \left\|\frac12Q_H\psi-\frac1t\int_0^tD(r)\psi\dd r \right\|\leq C_1M_3p^{3/2}\exp\left(\frac12C_2R^{1/2}p\right)t^{-1/5}
  \end{align}
  where  $C_1=C_1(R)$, and $M_3=M_3(\psi)$ are constants.
  \end{theorem}

  \begin{remark}
  We make the assumption $p\geq \pi$ here and in some of the statements that follow because it simplifies some of our estimates, here the right hand side of \eqref{eq:strongestimate}. A similar bound holds for small $p$. Since we will be examining a sequence of potentials with periods tending to infinity, this assumption poses no issues.
  \end{remark}

  Using the equality \cite{AschKnauf,RadinSimon} for $\psi\in H^1(\R)\cap \dom(X)$
  \begin{align}\label{eq:integral}
  X_H(t)\psi=X\psi+2\int_0^tD(r)\psi\dd r
  \end{align}
  the following corollary follows immediately.

  \begin{corollary}\label{cor:Corollary}
   Let $R>0$, $s>2$  and $p\geq \pi$. If $H$ is a $p$-periodic Schr\"{o}dinger operator with $\|V\|_\infty\leq R$, and $\psi\in \mathcal{D}_s$, then
  \begin{align}
   \left\|Q_H\psi-\frac1tX_H(t)\psi \right\|\leq t^{-1}\|X\psi\|+2C_1M_3p^{3/2}\exp\left(\frac12C_2R^{1/2}p\right)t^{-1/5}
  \end{align}
    where  $C_1=C_1(R)$, and $M_3=M_3(\psi)$ are constants.
  \end{corollary}

  We will need a pair of estimates uniform in the quasimomentum and independent of the potential $V$ with $\|V\|_\infty\leq R$, the first of which is below. 

  \begin{lemma}\label{lem:relbound}
  Let $R>0$ and suppose $\|V\|_{\infty}\leq R$.  Then, the operator
  \begin{align*}
    (H(k)+2R)^{1/2}(D+k)(H(k)+2R)^{-1}:L^2(\T,\dd q)\to L^2(\T,\dd q)
  \end{align*}
  is bounded uniformly in $k$: there is an $M_1=M_1(R)$ such that
  \begin{align}\label{eq:0414}
  \|(H(k)+2R)^{1/2}(D+k)(H(k)+2R)^{-1}\|^2\leq M_1.
  \end{align}
  \end{lemma}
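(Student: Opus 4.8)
The plan is to set $A:=H(k)+2R$ and estimate the everywhere-defined bounded operator $T:=A^{1/2}(D+k)A^{-1}$ directly, without splitting it up. First I would record the elementary properties of $A$. Since $\|V\|_\infty\le R$, one has $R\le V+2R\le 3R$ pointwise, so $A=(D+k)^2+(V+2R)$ is a positive self-adjoint operator with $A\ge R\,I$; consequently $A^{-1}$ is bounded with $\|A^{-1}\|\le 1/R$, the form domain of $A$ is $H^1(\T)$ (adding the bounded operator $V+2R$ does not change it), $\dom(A)=H^2(\T)$, and the quadratic form of $A$ at $g\in H^1(\T)$ equals $\|(D+k)g\|^2+\int_\T(V+2R)|g|^2\,\dd q$. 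In particular $T$ sends $L^2(\T)$ into itself: $A^{-1}$ carries $L^2(\T)$ into $H^2(\T)$, then $(D+k)$ into $H^1(\T)$, which is precisely the form domain of $A^{1/2}$.

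The estimate itself I would carry out as follows. Fix $\psi\in L^2(\T)$ and set $\phi:=A^{-1}\psi\in H^2(\T)$, so that $g:=(D+k)\phi$ lies in $H^1(\T)$ and $(D+k)g=(D+k)^2\phi\in L^2(\T)$; evaluating the form of $A$ at $g$ then gives
\begin{align*}
\|T\psi\|^2 &= \|A^{1/2}(D+k)\phi\|^2 = \|(D+k)^2\phi\|^2 + \int_\T(V+2R)\,|(D+k)\phi|^2\,\dd q\\
&\le \|(D+k)^2\phi\|^2 + 3R\,\|(D+k)\phi\|^2.
\end{align*}
To bound the right-hand side I would use only the identity $A\phi=\psi$ together with $\|\phi\|\le R^{-1}\|\psi\|$: from $(D+k)^2\phi=\psi-(V+2R)\phi$ one gets $\|(D+k)^2\phi\|\le\|\psi\|+3R\|\phi\|\le 4\|\psi\|$, and then $\|(D+k)\phi\|^2=\langle\phi,(D+k)^2\phi\rangle\le\|\phi\|\,\|(D+k)^2\phi\|\le 4R^{-1}\|\psi\|^2$. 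Combining these, $\|T\psi\|^2\le 16\|\psi\|^2+12\|\psi\|^2=28\|\psi\|^2$, so \eqref{eq:0414} holds with $M_1=28$; in fact this constant does not depend on $R$ either.

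The one point that needs care, and the main (mild) obstacle, is that $(D+k)\phi$ lies only in the form domain $H^1(\T)$ of $A$ and in general not in $\dom(A)=H^2(\T)$, so $\|A^{1/2}(D+k)\phi\|^2$ must be computed through the quadratic form of $A$ rather than by naively writing $\langle(D+k)\phi,A(D+k)\phi\rangle$; granting this, the rest is a two-line triangle-inequality computation, and uniformity in $k$ is automatic because no step above refers to $k$. Alternatively, one could first establish a bound on $\|(D+k)^2(H(k)+2R)^{-1}\|$ uniform in $k$ by a min-max comparison of $H_0(k)$ with $H(k)$, using that the eigenvalues of $H_0(0)$ and $H_0(\pi/p)$ are as listed above, and then feed that into the form computation; the direct argument bypasses the need for this.
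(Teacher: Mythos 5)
Your proof is correct and takes essentially the same route as the paper's: both evaluate $\|(H(k)+2R)^{1/2}g\|^2$ through the quadratic form of $H(k)+2R$ at $g=(D+k)(H(k)+2R)^{-1}\psi$ (the paper packages this as the inequality $\|(H(k)+2R)^{1/2}\varphi\|^2\le\|H_0(k)^{1/2}\varphi\|^2+3R\|\varphi\|^2$ extended from $H^2(\T)$ to $H^1(\T)$ by closedness, which is exactly your ``one point that needs care''), and then bound $\|(D+k)^2\phi\|$ and $\|(D+k)\phi\|$ for $\phi=(H(k)+2R)^{-1}\psi$ using only $R\le V+2R\le 3R$ and $\|(H(k)+2R)^{-1}\|\le 1/R$. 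Your constant $28$ is a little looser than the paper's because you estimate $\|(D+k)\phi\|^2=\langle\phi,(D+k)^2\phi\rangle$ by plain Cauchy--Schwarz whereas the paper first discards the nonnegative term $\langle\phi,(V+2R)\phi\rangle$; both come out $R$-independent, confirming your closing remark (and suggesting the paper's displayed $M_1=\tfrac1R+16$ is a small arithmetic slip).
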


  \begin{proof}
  For $\varphi\in H^2(\T)$, we have by self-adjointness
  \begin{align*}
  \|(H(k)+2R)^{1/2}\varphi\|^2&=\langle \varphi,(H(k)+2R)\varphi\rangle\leq \langle \varphi,(H_0(k)+3R)\varphi\rangle =\|H_0(k)^{1/2}\varphi\|^2+3R\|\varphi\|^2.
  \end{align*}
  For $\varphi\in H^1(\T)$, we take $\varphi_n\in H^2(\T)$, with $\| \varphi_n-\varphi\|_{H^1(\T)}\to 0$. Then, by the above bound, the sequence $(H(k)+2R)^{1/2}\varphi_n$ is Cauchy in $L^2(\T)$. Since $(H(k)+2R)^{1/2}$ with domain $H^1(\T)$ is closed, $(H(k)+2R)^{1/2}\varphi_n\to (H(k)+2R)^{1/2}\varphi$. Thus, we may conclude the same bound for $\varphi\in H^1(\T)$,
  \begin{align}\label{eq:1june22}
  \|(H(k)+2R)^{1/2}\varphi\|^2&\leq \|H_0(k)^{1/2}\varphi\|^2+3R\| \varphi\|^2.
  \end{align}

  Using \eqref{eq:1june22} with $\varphi=(D+k)(H(k)+2R)^{-1}\psi$ for $\psi\in L^2(\T)$ we have
  \begin{align*}
  \|(H(k)+2R)^{1/2}&H_0(k)^{1/2}(H(k)+2R)^{-1}\psi\|^2\\
  &\leq \| H_0(k)^{1/2}(D+k)(H(k)+2R)^{-1}\psi\|^2+3R\|(D+k)(H(k)+2R)^{-1}\psi\|^2\\
  &=\| H_0(k)(H(k)+2R)^{-1}\psi\|^2+3R\|H_0(k)^{1/2}(H(k)+2R)^{-1}\psi\|^2.
  \end{align*}
  Using the triangle inequality on the first term yields
  \begin{align*}
    \| H_0(k)(H(k)+2R)^{-1}\psi\|&\leq \|\psi\|+\| (V+2R)(H(k)+2R)^{-1}\psi\|\\
    &\leq \|\psi\|+ 3R\|(H(k)+2R)^{-1}\psi\|.
  \end{align*}
  For the second, $(H(k)+2R)^{-1}:L^2(\T)\to H^2(\T)$ implies
  \begin{align*}
  \| H_0(k)^{1/2}(H(k)+2R)^{-1}\psi\|^2&=\langle (H(k)+2R)^{-1}\psi,H_0(k)(H(k)+2R)^{-1}\psi\rangle\\
  &=\langle (H(k)+2R)^{-1}\psi,\psi\rangle-\langle (H(k)+2R)^{-1}\psi, (V+2R)(H(k)+2R)^{-1}\psi\rangle\\
  &\leq \|(H(k)+2R)^{-1}\|\|\psi\|^2
  \end{align*}
  by the Cauchy-Schwarz inequality and since $\| V\|_{\infty}\leq R$.
  Since $\inf \sigma(H(k))\geq -R$, we have for $\|\varphi\|=1$,
  \begin{align*}
  	\|(H(k)+2R)^{-1}\varphi\|^2=\int_{-R}^\infty \frac{1}{(x+2R)^2}\dd\mu_{\varphi}(x)\leq \frac{1}{R^2}
   \end{align*}
   by the functional calculus. Thus, $\|(H(k)+2R)^{-1}\|\leq \frac1R$ and
  we may put these estimates together and conclude \eqref{eq:0414} with $M_1(R)=\frac{1}{R}+16$.
  \end{proof}

  \begin{remark}
    The extension of the bound \eqref{eq:1june22} from $H^2(\T)$ to $H^1(\T)$ would also follow from the theory of quadratic forms \cite{Kato,ReedSimon2}; indeed, relative boundedness of a self-adjoint operator yields relative boundedness of the associated closed quadratic form, which has a representation in terms of the positive square root of the operator.
  \end{remark}

Before proving Lemma~\ref{lem:1june29}, our second uniform estimate needed to prove Theorem~\ref{thm:Quantper}, we prove the auxiliary Lemmas~\ref{lem:pointwise} and \ref{lem:1june30}. The first is a simple pointwise estimate for functions in $H^1(\R)\cap \dom(|X|^s)$, $s>0$, which will allow us to minimize the assumptions on the regularity and decay of our initial states in many of the statements which follow. The second shows that the series defining $U\psi(k,q)$ for these initial states converges pointwise. We will use the Sobolev norms defined by
  \begin{align*}
  \|\psi\|_{H^n(\R)}^2=\|\hat\psi\|^2+\|k^n\hat{\psi}\|^2=
  \|\psi\|^2+\|\psi^{(n)}\|^2
  \end{align*}
  for $n=1,2$, where $\hat{\psi}$ denotes the Fourier transform of $\psi$, and $\psi^{(n)}$ denotes the $n'th$ weak derivative.

  \begin{lemma}\label{lem:pointwise}
    Let $\psi\in H^1(\R)\cap \dom(|X|^s)$ for $s>0$. Then, there is a constant $C$ with $|\psi(x)|\leq \frac{C}{1+|x|^{s/2}}$ for all $x\in \R$.
  \end{lemma}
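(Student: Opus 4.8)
The plan is to combine two elementary facts: the one-dimensional Sobolev embedding $H^1(\R)\hookrightarrow L^\infty(\R)$, which controls $\psi$ where $|x|$ is bounded, and a power-law decay estimate of order $|x|^{-s/2}$ obtained from the fundamental theorem of calculus together with the hypothesis $|X|^s\psi\in L^2(\R)$, which controls $\psi$ where $|x|$ is large. Throughout I identify $\psi$ with its continuous (indeed locally absolutely continuous) representative, which exists since $\psi\in H^1(\R)$.

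First I would note that $|\psi|^2$ is locally absolutely continuous with $(|\psi|^2)'=2\Re(\overline{\psi}\,\psi')$, which lies in $L^1(\R)$ by Cauchy--Schwarz since $\psi,\psi'\in L^2(\R)$, and that $\psi(x)\to 0$ as $|x|\to\infty$ (both standard for $H^1(\R)$). Hence for $x>0$,
\[
|\psi(x)|^2=-\int_x^\infty (|\psi|^2)'(y)\,\dd y\le 2\int_x^\infty|\psi(y)|\,|\psi'(y)|\,\dd y\le 2\Big(\int_x^\infty|\psi(y)|^2\,\dd y\Big)^{1/2}\|\psi'\|.
\]
Since $|\psi(y)|^2\le x^{-2s}|y|^{2s}|\psi(y)|^2$ for $y\ge x>0$, the first integral is at most $x^{-2s}\||X|^s\psi\|^2$, so $|\psi(x)|\le \sqrt2\,x^{-s/2}\,\||X|^s\psi\|^{1/2}\|\psi'\|^{1/2}$; the same computation with $\int_{-\infty}^x$ handles $x<0$. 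Thus $|\psi(x)|\le C_0|x|^{-s/2}$ for all $x\ne0$, with $C_0=C_0(\psi)$ explicit.

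Finally I would use $\|\psi\|_\infty\le C_1\|\psi\|_{H^1(\R)}$ (a universal $C_1$) and patch the two bounds: for $|x|^{s/2}\le1$ the uniform bound gives $|\psi(x)|\le C_1\|\psi\|_{H^1(\R)}\le \frac{2C_1\|\psi\|_{H^1(\R)}}{1+|x|^{s/2}}$, while for $|x|^{s/2}>1$ we have $1+|x|^{s/2}<2|x|^{s/2}$ and the decay bound gives $|\psi(x)|\le \frac{2C_0}{1+|x|^{s/2}}$. Taking $C=2\max\{C_0,\,C_1\|\psi\|_{H^1(\R)}\}$ yields the stated inequality.

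I do not expect a genuine obstacle; the only step needing a word of care is the identity $|\psi(x)|^2=-\int_x^\infty(|\psi|^2)'(y)\,\dd y$, which relies on $(|\psi|^2)'\in L^1(\R)$ and $\psi(x)\to0$ at infinity — both standard facts for $H^1(\R)$ functions that can simply be cited.
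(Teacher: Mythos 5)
Your proof is correct and follows essentially the same approach as the paper's: the fundamental theorem of calculus together with Cauchy--Schwarz gives the tail decay of order $|x|^{-s/2}$, which is then patched with the $H^1$ sup bound for small $|x|$. The only (cosmetic) difference is that you bound the tail $\int_{|y|\ge |x|}|\psi|^2\,\dd y$ directly by $|x|^{-2s}\||X|^s\psi\|^2$, yielding an explicit constant for all $x\ne 0$, whereas the paper obtains the same decay by choosing a threshold $N$ beyond which $(1+|x|^{2s})\int_{|t|\ge |x|}|\psi|^2\,\dd t\le 1$.
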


  \begin{proof}
    By the definition of $\dom(|X|^s)$, we have for $\psi\in \dom(|X|^s)$, $\int_{\R}(1+|t|^{2s})|\psi(t)|^2\dd t<\infty$. Thus, there exists an $N>0$ such that for $|x|\geq N$
    \begin{align*}
      (1+|x|^{2s})\int_{|t|\geq |x|}|\psi(t)|^2\dd t \leq \int_{|t|\geq |x|}(1+|t|^{2s})|\psi(t)|^2\dd t \leq 1,
    \end{align*}
    and we may bound the tails of $\|\psi\|^2$ as
    \begin{align}\label{eq:1june21}
      \int_{|t|\geq |x|}|\psi(t)|^2\dd t \leq \frac{1}{1+|x|^{2s}}.
    \end{align}
    We now use that $\psi\in H^1(\R)$ to make this estimate a pointwise one. Indeed, by the fundamental theorem of calculus and the Cauchy-Schwarz inequality, we have for $x\geq N$,
    \begin{align*}
      |\psi(x)|^2&=-2\Re\left(\int_{x}^{\infty}\overline{\psi(t)}\psi'(t)\dd t\right)\leq 2\left(\int_{x}^\infty |\psi'(t)|^2\dd t \right)^{1/2}\left(\int_{x}^\infty |\psi(t)|^2\dd t \right)^{1/2}
    \end{align*}
    since $\psi\in H^1(\R)$ implies $\lim_{x\to\pm \infty}\psi(x)=0$. So, bounding the right hand side with \eqref{eq:1june21}, and using the inequality 
    \[
    \frac{1}{1+y^2}\leq \frac{2}{(1+y)^2}
    \]
    for $y\geq 0$,
    we have 
    \begin{align*}
    |\psi(x)|^2\leq \frac{2\sqrt{2}}{1+x^{s}}\| \psi\|_{H^1(\R)}
    \end{align*}
    for $x\geq N$.
   Thus, by a similar computation for $x\leq -N$ and taking a square root, we have for $|x|\geq N$,
    \begin{align*}
      |\psi(x)|\leq \frac{2^{5/4}\|\psi\|^{1/2}_{H^1(\R)}}{1+|x|^{s/2}}.
    \end{align*}
    The lemma then follows by taking $C=\max\{ 2^{5/4}\|\psi\|^{1/2}_{H^1(\R)}, (1+N^{s/2})\sup_{|x|<N}|\psi(x)| \}$.
  \end{proof}

  \begin{lemma}\label{lem:1june30}
    For $\psi\in L^2(\R)\cap L^1(\R)$, the series $\sum_{\ell\in\Z}e^{-ik(q+p\ell)}\psi(q+p\ell)$ defining $U\psi(k,q)$ is absolutely convergent for a.e. $q\in \T$ and all $k\in \T^*$.
    Furthermore, for $s>2$, $ \dom(|X|^s)\subset L^1(\R)$, and for $\psi\in H^1(\R)\cap \dom(|X|^s)$, this series is absolutely uniformly convergent for all $k\in \T^*$ and $q\in \T$. 
  \end{lemma}

  \begin{proof}

	Let $\psi\in L^2(\R)\cap L^1(\R)$. Then by the monotone convergence theorem and an affine change of variables, we have 
	    \begin{align}\label{eq:0916}
	    \begin{split}
      \int_{\T}\sum_{\ell\in\Z}|\psi(q+p\ell)|\dd q&=
      \sum_{\ell\in\Z}\int_{0}^p|\psi(q+p\ell)|\dd q
      =\sum_{\ell\in\Z}\int_{p\ell}^{p(\ell+1)}|\psi(q)|\dd q\\
      &=\int_{\R}|\psi(q)|\dd q<\infty
      \end{split}
    \end{align}
    so that $\sum_{\ell\in \Z}e^{-ik(q+p\ell)}\psi(q+p\ell)$ converges absolutely for all $k$ and a.e. $q$.

By the Cauchy-Schwarz inequality, for $\psi\in \dom(|X|^s)$ we have
    \begin{align*}
      \int_{\R}|\psi(q)|\dd q&=
      \int_{\R}|\psi(q)|(1+|q|^{s})(1+|q|^{s})^{-1}\dd q\\
      &\leq
      \left(\int_{\R}|\psi(q)|^2(1+|q|^{s})^2\dd q\right)^{1/2}
      \left(\int_{\R}\frac1{(1+|q|^{s})^{2}}\dd q\right)^{1/2}
    \end{align*}
    which is finite by our choice of $s$. Thus, $\dom(|X|^s)\subset L^1(\R)$.  
  For $\psi\in H^1(\R)\cap \dom(|X|^s)$, using Lemma~\ref{lem:pointwise}, we have
  \begin{align}\label{eq:july29}
    |\psi(q+p\ell)|\leq \frac{C}{1+|q+p\ell|^{s/2}}\leq CM_\ell
  \end{align}
  where
  \[ M_\ell=\begin{cases}\frac{1}{1+|p\ell|^{s/2}},&\ell\geq 0\\\frac{1}{1+|p(\ell+1)|^{s/2}},& \ell\leq -1 \end{cases}\]
  and absolute uniform convergence follows.

  \end{proof}

  We now prove our last lemma before the main result of this section. The purpose of this lemma is to give an $L^\infty(\T^*,\frac{\dd k}{|\T^*|}; L^2(\T,\dd q))$ estimate for quantities appearing in the proof of Theorem~\ref{thm:Quantper}, allowing the separation of an integral over $\T^*$ into one over a set on which we have a good time dependent estimate of the integrand, and another on a set whose measure decays in time by Lemma~\ref{lem:DOSCont}. 

  \begin{lemma}\label{lem:1june29}
    Let $R>0$, $s>2$, $p\geq \pi$ and suppose $\|V\|_{\infty}\leq R$. For $\psi\in \mathcal{D}_s$ and a set $Z\subset \T^*$ with $\leb(Z)=0$, we have
    \begin{align}\label{eq:1july5}
      \sup_{k\in \T^*\setminus Z}\|(H(k)+2R)U\psi(k,\cdot)\|_{L^2(\T,\dd q)}^2\leq (1+3R)^2M_2
    \end{align}
    where $M_2=M_2(\psi)$ is a constant.
  \end{lemma}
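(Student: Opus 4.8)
The plan is to bound $\|(H(k)+2R)U\psi(k,\cdot)\|_{L^2(\T)}$ by moving the operator $(H(k)+2R) = (D+k)^2 + V + 2R$ through the Floquet transform. On Schwartz functions one has the intertwining relation $U(H\psi)(k,\cdot) = H(k)U\psi(k,\cdot)$ (equivalently $U(-\psi'' + V\psi)(k,q) = ((D+k)^2+V)U\psi(k,q)$), and I would first argue this extends to $\psi$ in the stated class. Since $\psi \in H^2(\R)$, the function $-\psi'' + (V+2R)\psi$ lies in $L^2(\R)$; moreover it lies in $L^1(\R)$: indeed $\psi'' \in \dom(|X|^s) \subset L^1(\R)$ by the inclusion proven in Lemma~\ref{lem:1june30}, $V\psi \in L^1$ since $\|V\|_\infty \le R$ and $\psi \in \dom(|X|^s)\subset L^1$, and $\psi \in L^1$ similarly. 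Hence $-\psi'' + (V+2R)\psi \in L^2(\R)\cap L^1(\R)$, and by Lemma~\ref{lem:1june30} its Floquet transform at every $k$ is given pointwise by the absolutely convergent series $\sum_\ell e^{-ik(q+p\ell)}(-\psi''+(V+2R)\psi)(q+p\ell)$.

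The key step is then to identify this series with $(H(k)+2R)U\psi(k,\cdot)$ for a.e.\ $k$. For Schwartz functions this is a direct computation: differentiating the series \eqref{eq:fourier} termwise in $q$ produces exactly the action of $(D+k)^2$ on each term (the $e^{-ik(q+p\ell)}$ factors convert $-\tfrac{d^2}{dq^2}$ into $(D+k)^2$ acting on $q\mapsto e^{-ikq}\psi(\cdot)$), and multiplication by the $p$-periodic $V+2R$ passes through the sum. For general $\psi$ in our class I would run this through the mollifier/truncation approximation already set up in the proof of Lemma~\ref{lem:1june30}: approximate $\psi$ by $\psi_M^\epsilon \in \Schwartz(\R)$ in $H^2(\R)$-norm (the truncation $\psi_M = \chi_{[p(1-M),pM]}\psi$ converges to $\psi$ in $H^2$ because $\psi, \psi', \psi'' \in L^2$, and mollification converges in $H^2$ as well), use that $U$ is bounded $H^2(\R) \to L^2(\T^*; H^2(\T))$ by the intertwining identity, and pass to the limit along a subsequence to get pointwise-in-$k$ convergence in $L^2(\T)$. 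The upshot is that for $k$ outside a null set $Z$,
\begin{align*}
\|(H(k)+2R)U\psi(k,\cdot)\|_{L^2(\T)} = \Bigl\| \sum_{\ell\in\Z} e^{-ik(q+p\ell)}\bigl(-\psi''+(V+2R)\psi\bigr)(q+p\ell) \Bigr\|_{L^2(\T)}.
\end{align*}

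Finally I would estimate the right-hand side uniformly in $k$. Taking absolute values kills the phase, and by the computation \eqref{eq:0916} (with $\psi$ replaced by $-\psi''+(V+2R)\psi$) one has $\|\sum_\ell |f(q+p\ell)|\|_{L^2(\T)} \le \|\,\sum_\ell |f(\cdot + p\ell)|\,\|_{L^\infty}$ times $|\T|^{1/2}$, or more cleanly I would bound the $L^2(\T)$ norm of the series by first using the pointwise bound from Lemma~\ref{lem:pointwise} applied to $f = -\psi''+(V+2R)\psi$ — but note $f$ need not be in $H^1$, so instead I would just use $\|f\|_{L^1(\R)}$: writing $g(q) = \sum_\ell |f(q+p\ell)|$, we have $\int_\T g\,dq = \|f\|_{L^1(\R)}$ and, using Minkowski's inequality for the series together with $\|f(\cdot+p\ell)\|_{L^2(0,p)} \le \|f\|_{L^2(\R)}$ won't directly close; the clean route is $\|g\|_{L^2(\T)}^2 = \int_\T \bigl(\sum_\ell |f(q+p\ell)|\bigr)^2 dq$, and splitting $f = V\psi + 2R\psi - \psi''$, bounding $|V\psi| \le R|\psi|$, and using the decay $|\psi(x)| \le C(1+|x|^{s/2})^{-1}$ (Lemma~\ref{lem:pointwise}) to dominate $\sum_\ell|\psi(q+p\ell)|$ uniformly, while handling the $\psi''$ term via $\|\sum_\ell |\psi''(\cdot+p\ell)|\|_{L^2(\T)} \le \sum_\ell \|\psi''(\cdot+p\ell)\|_{L^2(0,p)}$ is still infinite — so one really does want $\psi'' \in \dom(|X|^s)$ to get pointwise decay of $\psi''$ too, via Lemma~\ref{lem:pointwise} applied to $\psi''$ once we know $\psi'' \in H^1$... which we do not assume. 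The correct bookkeeping: write $(H(k)+2R)U\psi = (1 + |k|)$-type bound is avoided by instead noting $(H(k)+2R)U\psi(k,\cdot) = U((-\tfrac{d^2}{dx^2}+V+2R)\psi)(k,\cdot)$ and applying the $L^1$–$L^\infty$ estimate from Lemma~\ref{lem:1june30}: the series for $Ug$ with $g := (-\tfrac{d^2}{dx^2}+V+2R)\psi \in L^1 \cap L^2$ satisfies $\sup_k\|Ug(k,\cdot)\|_{L^2(\T)}^2 \le$ (a constant depending on $g$), and one extracts the factor $(1+3R)^2$ by writing $g = (1+3R)\cdot\tilde g$ with $\tilde g = (1+3R)^{-1}(-\psi''+(V+2R)\psi)$ bounded in terms of $\psi$ only (since $\|V+2R\|_\infty \le 3R$). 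Setting $M_2 := \sup_k \|U\tilde g(k,\cdot)\|_{L^2(\T)}^2$, which is finite and depends only on $\psi$ through $\|\psi''\|_{L^1 \cap L^2}$, $\|\psi\|_{L^1\cap L^2}$ and the decay constant, yields \eqref{eq:1july5}.

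\textbf{Main obstacle.} The delicate point is not the inequality itself but justifying the intertwining $U(H\psi) = H(k)U\psi$ pointwise in $k$ for $\psi$ merely in $H^2(\R)\cap\dom(|X|^s)$ rather than Schwartz, and extracting a bound that depends on $R$ only through the clean factor $(1+3R)^2$ (so that it survives the later limiting argument over periodic approximants with a common bound $R$). This forces the careful split $g = (1+3R)\tilde g$ and the use of the $L^1 \cap L^2$ framework of Lemma~\ref{lem:1june30}, where $\psi'' \in \dom(|X|^s)$ and $\psi' \in L^1$ are exactly the hypotheses that guarantee $g \in L^1(\R)$ and hence that the defining series converges and represents $Ug$ uniformly.
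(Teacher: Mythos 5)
Your overall plan — push $(H(k)+2R)$ through the Floquet transform to get $U\big((H+2R)\psi\big)$, peel off a factor $(1+3R)$, and bound the resulting series uniformly in $k$ — is essentially the same strategy as the paper's, which works instead with $H_0(k)U\psi(k,\cdot) = -\sum_\ell e^{-ik(q+p\ell)}\psi''(q+p\ell)$ and absorbs $V$ via the triangle inequality $\|(H(k)+2R)\varphi\|\le(1+3R)(\|H_0(k)\varphi\|+\|\varphi\|)$. There are, however, two concrete gaps.

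First, your approximation scheme for establishing the intertwining pointwise in $k$ does not work as stated: $\psi_M=\chi_{[p(1-M),pM]}\psi$ does \emph{not} converge to $\psi$ in $H^2(\R)$ (multiplication by a sharp cutoff destroys $H^2$ regularity even when $\psi$ is smooth), and the mollification $g_\epsilon*\psi_M$ then converges to $\psi_M$, not to $\psi$, in any Sobolev norm. The paper avoids approximation in $H^2$ entirely: starting from the absolutely convergent series for $U\psi$ (Lemma~\ref{lem:1june30}), it computes the weak derivatives of $U\psi(k,\cdot)$ term by term, using integration by parts, a telescoping boundary sum, and dominated convergence — this is exactly where the hypothesis $\psi'\in L^1(\R)$ enters. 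You never use $\psi'\in L^1$ in your proposal, which is a tip-off that something is missing.

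Second, your final estimate wavers and ultimately leaves the $\sup_k$ bound unjustified. You correctly observe that $g\in L^1\cap L^2$ is not enough to make $\sup_k\|Ug(k,\cdot)\|_{L^2(\T)}^2$ finite (unitarity only gives an $L^2$-in-$k$ bound), and you correctly guess that the decay encoded in $\psi''\in\dom(|X|^s)$ is what saves you — but then you get stuck because Lemma~\ref{lem:pointwise}'s \emph{pointwise} bound requires $\psi''\in H^1$, which is not assumed. The resolution, which the paper uses, is to avoid the pointwise bound and use only the \emph{integrated tail} estimate \eqref{eq:1june21}, namely $\int_{|t|\ge|x|}|\psi''|^2\le(1+|x|^{2s})^{-1}$ for $|x|$ large, which follows from $\psi''\in\dom(|X|^s)$ alone. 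This gives $\big(\int_{p\ell}^{p(\ell+1)}|\psi''|^2\big)^{1/2}\lesssim(1+|\ell|^s)^{-1}$, an $\ell^1$-summable sequence for $s>1$ (here $s>2$), which is exactly what you need after Cauchy--Schwarz to close the estimate for $\|H_0(k)U\psi(k,\cdot)\|^2$. With that fix your $(1+3R)\tilde g$ bookkeeping (using $|\tilde g|\le|\psi''|+|\psi|$ pointwise) does give a $\psi$-only constant $M_2$ as required.
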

  \begin{proof}
    For $k\in \T^*$ and $\varphi\in H^2(\T)$, we find
    \begin{align}\label{eq:1june29}
    \begin{split}
      \|(H(k)+2R)\varphi\|_{L^2(\T,\dd q)}&\leq \|H_0(k)\varphi\|_{L^2(\T,\dd q)}+\|(V+2R)\varphi\|_{L^2(\T,\dd q)}\\
      &\leq (1+3R)(\|H_0(k)\varphi\|_{L^2(\T,\dd q)}+\|\varphi\|_{L^2(\T,\dd q)}).
      \end{split}
    \end{align}
   Thus, it will suffice to find a uniform bound on $\|H_0(k)U\psi(k,\cdot)\|$ and $\|U\psi(k,\cdot)\|$.

    Fix $s>2$ and $\psi\in \mathcal{D}_s$. There exists a set $Z\subset \T^*$ with $\leb(Z)=0$ and such that for $k\in \T^*\setminus Z$, 
    \begin{align*}
      H_0(k)U\psi(k,q)=-\sum_{\ell\in\Z}e^{-ik(q+p\ell)}\psi''(q+p\ell)
    \end{align*}
    in $L^2(\T,\dd q)$. By Lemma~\ref{lem:1june30}, the series on the right is absolutely convergent for $k\in \T^*\setminus Z$ and a.e. $q$.
   
    Using this expression, we may bound $|H_0(k)U\psi(k,q)|^2$ pointwise for a.e. $q$ as follows:
    \begin{align*}
      |H_0(k)U\psi(k,q)|^2
      &=\sum_{\ell\in\Z}\sum_{m\in \Z}\psi''(q+p\ell)\overline{\psi''(q+pm)}e^{-ikp(\ell-m)}\\
      &\leq\sum_{\ell\in\Z}\sum_{m\in \Z}|\psi''(q+p\ell)||\psi''(q+pm)|.
    \end{align*}
    Then, by the monotone convergence theorem,
    \begin{align*}
    \int_{\T}|H_0(k)U\psi(k,q)|^2\dd q&\leq \sum_{\ell\in\Z}\sum_{m\in \Z}\int_0^p|\psi''(q+p\ell)||\psi''(q+pm)|\dd q\\
    &\leq \sum_{\ell\in\Z}\left(\int_0^p|\psi''(q+p\ell)|^2\dd q\right)^{1/2}\sum_{m\in \Z}\left(\int_{0}^p|\psi''(q+pm)|^2\dd q\right)^{1/2}\\
    &=\sum_{\ell\in\Z}\left(\int_{p\ell}^{p(\ell+1)}|\psi''(q)|^2\dd q\right)^{1/2}\sum_{m\in \Z}\left(\int_{pm}^{p(m+1)}|\psi''(q)|^2\dd q\right)^{1/2}
    \end{align*}
    by the Cauchy-Schwarz inequality. For the $N$ of Lemma~\ref{lem:pointwise} corresponding to $\psi''\in \dom(|X|^s)$ and since $p\geq \pi$, we have for $\ell\geq N+1$
    \begin{align*}
      \int_{p\ell}^{p(\ell+1)}|\psi''(q)|^2\dd q\leq \frac{1}{1+p^{2s}|\ell|^{2s}}\leq \frac{1}{1+|\ell|^{2s}}
    \end{align*}
    by \eqref{eq:1june21}. Similarly, we have for $\ell\leq -(N+1)$
    \begin{align*}
    \int_{p\ell}^{p(\ell+1)}|\psi''(q)|^2\dd q\leq \frac{1}{1+|\ell+1|^{2s}}
    \end{align*}
    so that
    \begin{align*}
      \sum_{\ell\in\Z}\left(\int_{p\ell}^{p(\ell+1)}|\psi''(q)|^2\dd q\right)^{1/2}\leq
      \sum_{\ell=-N}^{N}\left(\int_{p\ell}^{p(\ell+1)}|\psi''(q)|^2\dd q\right)^{1/2}+\sum_{|\ell|\geq N}\frac{\sqrt{2}}{1+|\ell|^s}.
    \end{align*}

    Thus,
    \begin{align}
      \sum_{\ell\in\Z}\left(\int_{p\ell}^{p(\ell+1)}|\psi''(q)|^2\dd q\right)^{1/2}\leq
      (2N+1)\|\psi''\|_{L^2(\R)}+\sum_{|\ell|\geq N}\frac{\sqrt{2}}{1+|\ell|^s}
    \end{align}
    where we emphasize that $N$ only depends only on $\psi$. By the same argument, for the $N'$ corresponding to $\psi$,
    \begin{align}
      \|U\psi(k,\cdot)\|^2\leq (2N'+1)\|\psi\|_{L^2(\R)}+\sum_{|\ell|\geq N'}\frac{\sqrt{2}}{1+|\ell|^s}
    \end{align}
    and \eqref{eq:1july5} follows from \eqref{eq:1june29}.

  \end{proof}

  We are now ready to prove Theorem~\ref{thm:Quantper}. Its proof involves a comparison using the min-max theorem; we note here that the eigenvalues of $H_0(0)$
  are $\left\{\frac{(2\ell\pi)^2}{p^2} \right\}_{\ell=0}^\infty$, while those for $H_0(\frac{\pi}{p})$ are $\left\{\frac{((2\ell+1)\pi)^2}{p^2} \right\}_{\ell=0}^\infty$.

  \begin{proof}[Proof of Theorem~\ref{thm:Quantper}]
    By unitarity of $U$, it will suffice to bound the norm of
    \[U\left(\frac1t\int_0^tD(r)\psi\dd r\right)-UQ_{H}\psi\]
     in $L^2\left(\T^*, \frac{\dd k}{|\T^*|}; L^2(\T,\dd q)\right)$.

    We use the identity $P_n(k)(D+k)P_n(k)=\frac12\frac{\dd E_n(k)}{\dd k}P_n(k)$ for almost every $k\in \T^*$ \cite{AschKnauf} to compute
    \begin{align}\label{eq:0831212}
    \begin{split}
      &\left\|U\left(\frac1t\int_0^tD(r)\psi\dd r\right)-\left(\int_{\T^*}^\oplus\sum_{n=1}^\infty\frac12 \frac{\dd E_n(k)}{\dd k}P_n(k)\frac{\dd k}{|\T^*|}U\psi \right) \right\|^2\\
    &=\left\|U\left(\frac1t\int_0^tD(r)\psi\dd r\right)-\left(\int_{\T^*}^\oplus\sum_{n=1}^\infty P_n(k)(D+k)P_n(k)\frac{\dd k}{|\T^*|}U\psi \right) \right\|^2\\
    &=\left\|\left(\frac1t\int_0^tUD(r)U^*U\psi\dd r\right)-\left(\int_{\T^*}^\oplus\sum_{n=1}^\infty P_n(k)(D+k)P_n(k)\frac{\dd k}{|\T^*|}U\psi \right) \right\|^2.
    \end{split}
    \end{align}
    We now show 
    \begin{align}\label{eq:083121}
    UD(r)U^*U\psi=\int_{\T^*}^\oplus e^{irH(k)}(D+k)e^{-irH(k)}\frac{\dd k}{|\T^*|}U\psi
    \end{align} 
    as follows: Since
    \begin{align*}
    	Ue^{\pm irH}U^*=\int_{\T^*}^\oplus e^{\pm i rH(k)}\frac{\dd k}{|\T^*|}, \;\; UDU^*=\int_{\T^*}^\oplus (D+k)\frac{\dd k}{|\T^*|},
    \end{align*}
	 it will suffice to verify $\int_{\T^*}^\oplus e^{- i rH(k)}\frac{\dd k}{|\T^*|}U\psi$ is in the domain of 
    $\int_{\T^*}^\oplus (D+k)\frac{\dd k}{|\T^*|}$ for $\psi\in \mathcal{D}_s\subset H^2(\R)$. Since $\psi\in H^2(\R)$, for a full-measure set of $k\in \T^*$, $U\psi(k,\cdot)\in H^2(\T)$. Thus, for these $k$, $e^{-irH(k)}U\psi(k,\cdot)\in H^2(\T)$, and in particular, $\psi(k,\cdot)\in H^1(\T)$. Using that $\inf\sigma(H(k))\geq -R$ we may may compute $(H(k)+2R)^{-1/2}$ and find 
    \begin{align*}
    \int_{\T^*}&\left\|(D+k)e^{-irH(k)}U\psi(k,\cdot) \right\|^2\frac{\dd k}{|\T^*|}\\
    &= \int_{\T^*}\left\|(D+k)e^{-irH(k)}(H(k)+2R)^{-1/2}(H(k)+2R)^{1/2}U\psi(k,\cdot) \right\|^2\frac{\dd k}{|\T^*|}\\
    &\leq 
        \int_{\T^*}\left\|(D+k)(H(k)+2R)^{-1/2}\right\|^2\left\|e^{-irH(k)}U((H+2R)^{1/2}\psi)(k,\cdot) \right\|^2\frac{\dd k}{|\T^*|}
    \end{align*}
since $(H(k)+2R)^{1/2}U\psi=U(H+2R)^{1/2}\psi$ for a.e. $k\in \T^*$. Finally, using the bound 
\[\| (D+k)(H(k)+2R)^{-1/2}\|\leq 1,\]
 and that $U$ is an isometry, we may estimate
 \begin{align*}
 \int_{\T^*}&\left\|(D+k)e^{-irH(k)}U\psi(k,\cdot) \right\|^2\frac{\dd k}{|\T^*|}\leq 
\int_{\T^*}\left\|U((H+2R)^{1/2}\psi)(k,\cdot) \right\|^2\frac{\dd k}{|\T^*|}\\
&=\|(H+2R)^{1/2}\psi\|^2_{L^2(\R)}\leq (1+3R)\|\psi\|^2_{H^1(\R)}.
 \end{align*}

   Thus, using \eqref{eq:083121}, we may expand the final expression in \eqref{eq:0831212} in terms of eigenprojections twice to find
    \begin{align*}
    \begin{split}
   \left\|\left(\frac1t\int_0^t\int_{\T^*}^\oplus\sum_{m=1}^\infty e^{iE_m(k)r}P_m(k)(D+k)\sum_{n=1}^\infty e^{-iE_n(k)r}P_n(k)\frac{\dd k}{|\T^*|}U\psi\dd r\right)\right.\\
  \left.-\left(\int_{\T^*}^\oplus\sum_{n=1}^\infty P_n(k)(D+k)P_n(k)\frac{\dd k}{|\T^*|}U\psi \right) \right\|^2.   
   \end{split}
    \end{align*}
    Using that $\psi\in H^2(\R)$ again, we may justify interchanging the sum and $(D+k)$ for each $k\not\in \frac{\pi}{p}\Z$ with $U\psi(k,\cdot)\in H^2(\T)$, as follows:
    \begin{align*}
    (D+k)\sum_{n=1}^\infty e^{-iE_n(k)s}P_n(k)U\psi&=(D+k)(H(k)+2R)^{-1/2}\sum_{n=1}^\infty e^{-iE_n(k)s}
    P_n(k)(H(k)+2R)^{1/2}U\psi\\
    &=\sum_{n=1}^\infty e^{-iE_n(k)s}(D+k)
    P_n(k)U\psi,
    \end{align*}
    since 
    $\|(D+k)(H(k)+2R)^{-1/2}\|\leq 1/\sqrt{R}$,
	as in the proof of Lemma~\ref{lem:relbound}. Thus, passing the projections $P_m(k)$ through the sum in $n$, we find
    \begin{align*}
 \begin{split}
  & \left\|\left(\frac1t\int_0^t\int_{\T^*}^\oplus\sum_{m=1}^\infty e^{iE_m(k)r}P_m(k)(D+k)\sum_{n=1}^\infty e^{-iE_n(k)r}P_n(k)\frac{\dd k}{|\T^*|}U\psi\dd r\right)\right.\\
 & \qquad  \qquad \qquad\qquad \left.-\left(\int_{\T^*}^\oplus\sum_{n=1}^\infty P_n(k)(D+k)P_n(k)\frac{\dd k}{|\T^*|}U\psi \right) \right\|^2  \\
       &=\int_{\T^*}\left\|\left(\frac1t\int_0^t\sum_{m=1}^\infty\sum_{n=1}^\infty e^{i(E_m(k)-iE_n(k))r}P_m(k)(D+k)P_n(k)U\psi\dd r\right)\right.\\
      &\qquad  \qquad \qquad\qquad-\left.\left(\sum_{n=1}^\infty P_n(k)(D+k)P_n(k)U\psi \right) \right\|^{2}\frac{\dd k}{|\T^*|}\\
   & = \int_{\T^*}\left\|\frac1t\int_{0}^t\sum_{m=1}^\infty \sum_{\substack{n=1\\n\ne m}}^\infty P_m(k)e^{i(E_m(k)-E_n(k))r}(D+k)P_n(k)U\psi(k,\cdot)\dd r \right\|^2\frac{\dd k}{|\T^*|}   		 \end{split}	
    \end{align*}
cancelling the diagonal terms. We commute the time integral with the sum in $m$ using dominated convergence for the Bochner integral \cite{HillePhillips}. Namely, we note that for $M\in \N$ and a.e. $k$, we have by orthogonality of the projections 
\begin{align*}
&\left\|\sum_{m=1}^{M}\sum_{\substack{n=1\\n\ne m}}^\infty P_m(k)e^{i(E_m(k)-E_n(k))r}(D+k)P_n(k)U\psi(k,\cdot)\right\|^2\\
&\leq \sum_{m=1}^\infty \left\|P_m(k)\sum_{\substack{n=1\\n\ne m}}^\infty e^{-iE_n(k)r}(D+k)P_n(k)U\psi(k,\cdot)\right\|^2\\
    &\leq \|(D+k)(H(k)+2R)^{-1/2} \|^2\sum_{\substack{n=1\\n\ne m}}^\infty \left\| P_n(k)(H(k)+2R)^{1/2}U\psi(k,\cdot)\right\|^2\\
    &\leq  \|(D+k)(H(k)+2R)^{-1/2} \|^2 \left\| (H(k)+2R)^{1/2}U\psi(k,\cdot)\right\|^2\\
\end{align*}
so that 
\begin{align*}
\sup_{r\in[0,t]}&\left\|\sum_{m=1}^{M}\sum_{\substack{n=1\\n\ne m}}^\infty P_m(k)e^{i(E_m(k)-E_n(k))r}(D+k)P_n(k)U\psi(k,\cdot)\right\|\\
&\leq \|(D+k)(H(k)+2R)^{-1/2} \| \left\| (H(k)+2R)^{1/2}U\psi(k,\cdot)\right\|
\end{align*}
which is finite for a.e. $k$. Thus, using uniform convergence to pass the integral through the sum in $n$, we have 
\begin{align*}
 \int_{\T^*}&\left\|\frac1t\int_{0}^t\sum_{m=1}^\infty \sum_{\substack{n=1\\n\ne m}}^\infty P_m(k)e^{i(E_m(k)-E_n(k))r}(D+k)P_n(k)U\psi(k,\cdot)\dd r \right\|^2\frac{\dd k}{|\T^*|}   \\
 &= \int_{\T^*}\sum_{m=1}^\infty\left\| P_m(k)\sum_{\substack{n=1\\n\ne m}}^\infty\frac1t\int_0^t e^{i(E_m(k)-E_n(k))r}\dd r(D+k)P_n(k)U\psi(k,\cdot)\right\|^2\frac{\dd k}{|\T^*|} 
\end{align*}
by the orthogonality of the projections $P_m(k)$.

  We begin by bounding the integrand above pointwise for all $m$ and a.e. $k$. For $k\notin\frac{\pi}{p}\Z$, let $v_\ell(k)$ be the basis of normalized eigenfunctions of $H(k)$, i.e. for $\varphi\in L^2(\T,\dd q)$, $P_{\ell}(k)\varphi=\langle \varphi,v_\ell(k)\rangle_{L^2(\T,\dd q)}v_{\ell}(k)$.
  Then, defining the shorthand 
  \begin{align*}
 f_{m,n}=f_{m,n}(t,k):=\frac1t\int_0^te^{i(E_m(k)-E_n(k))r}\dd r
  \end{align*}
  in the above, we have
  \begin{align*}
  & \int_{\T^*}\sum_{m=1}^\infty\left\| P_m(k)\sum_{\substack{n=1\\n\ne m}}^\infty f_{m,n}(D+k)P_n(k)U\psi(k,\cdot)\right\|^2\frac{\dd k}{|\T^*|}\\
  &= \int_{\T^*}\sum_{m=1}^\infty\left|\left\langle \sum_{\substack{n=1\\n\ne m}}^{\infty}f_{m,n}(D+k)P_n(k)U\psi(k,\cdot),v_m(k) \right\rangle\right|^2\frac{\dd  k}{|\T^*|}\\
  &= \int_{\T^*}\sum_{m=1}^\infty\left|\left\langle (H(k)+2R)^{-1/2}\sum_{\substack{n=1\\n\ne m}}^{\infty}f_{m,n}(H(k)+2R)^{1/2}(D+k)P_n(k)U\psi(k,\cdot),v_m(k) \right\rangle\right|^2\frac{\dd k}{|\T^*|}\\
  &= \int_{\T^*}\sum_{m=1}^\infty\left|\left\langle\sum_{\substack{n=1\\n\ne m}}^{\infty}f_{m,n}(H(k)+2R)^{1/2}(D+k)P_n(k)U\psi(k,\cdot),\frac{1}{\sqrt{E_m(k)+2R}}v_m(k) \right\rangle\right|^2\frac{\dd k}{|\T^*|},
\end{align*}  
  using that $(H(k)+2R)^{-1/2}$ is self-adjoint, and each $v_\ell(k)$ is an eigenfunction in the last line. Thus, 
  \begin{align*}
   & \int_{\T^*}\sum_{m=1}^\infty\left\| P_m(k)\sum_{\substack{n=1\\n\ne m}}^{\infty}f_{m,n}(D+k)P_n(k)U\psi(k,\cdot)\right\|^2\frac{\dd k}{|\T^*|}\\
&  \leq  \int_{\T^*}\sum_{m=1}^\infty\left\| \sum_{\substack{n=1\\n\ne m}}^{\infty}f_{m,n}(H(k)+2R)^{1/2}(D+k)P_n(k)U\psi(k,\cdot)\right\|^2\frac{1}{E_m(k)+2R}\frac{\dd k}{|\T^*|}
  \end{align*}
  by the Cauchy-Schwarz inequality.  Then, using \eqref{eq:0414}, we may insert a factor of $(H(k)+2R)^{-1}(H(k)+2R)=I$ in the above to find 
  \begin{align*}
    \int_{\T^*}&\sum_{m=1}^\infty \left\| \sum_{\substack{n=1\\n\ne m}}^{\infty}f_{m,n}(H(k)+2R)^{1/2}(D+k)P_n(k)U\psi(k,\cdot)\right\|^2\frac{\dd k}{|\T^*|}\\
   & \leq M_1 \int_{\T^*}\sum_{m=1}^\infty\left\| \sum_{\substack{n=1\\n\ne m}}^{\infty}f_{m,n}P_n(k)(H(k)+2R)U\psi(k,\cdot)\right\|^2\frac{\dd k}{|\T^*|}\\
    &=M_1 \int_{\T^*}\sum_{m=1}^\infty\sum_{\substack{n=1\\n\ne m}}^{\infty}\left\| f_{m,n}P_n(k)(H(k)+2R)U\psi(k,\cdot)\right\|^2\frac{\dd k}{|\T^*|}
  \end{align*}
  by orthogonality of the projections $P_n(k)$.
  In summary, we thus far have shown
  \begin{align}\label{eq:1july19}
    \begin{split}
   & \left\|\left(\frac1t\int_0^tD(r)\psi\dd r\right)-Q_H\psi \right\|^2\\
    &\leq M_1\int_{\T^*}\sum_{m=1}^\infty\sum_{\substack{n=1\\n\ne m}}^{\infty}\biggl\| f_{m,n} P_n(k)(H(k)+2R)U\psi(k,\cdot)\biggr\|^2 \frac{1}{E_m(k)+2R}\frac{\dd k}{|\T^*|}.
    \end{split}
  \end{align}

  Before integrating the right hand side of \eqref{eq:1july19} in $k$, we partition the Brillouin zone into the sets of ``good" and ``bad" $k\in \mathcal{B}$. Using the notation of Lemma~\ref{lem:DOSCont}, we define
  \[
  B_t:=\mathcal{B}_{\epsilon_t},\quad \epsilon_t:= \frac1{C_2\pi}t^{-4/5}.
  \]
 Then, for $k\in G_t:=\mathcal{B}\setminus B_t$, computing the integral defining $f_{m,n}$, we have
  \begin{align}
  |f_{m,n}|\leq \frac{2}{t|E_m(k)-E_n(k)|}\leq \frac{2}{t\epsilon_t}
  \end{align}
  so that 
  \begin{align*}
  & \int_{G_t}\sum_{m=1}^\infty\sum_{\substack{n=1\\n\ne m}}^{\infty}\left\| f_{m,n}P_n(k)(H(k)+2R)U\psi(k,\cdot)\right\|^2\frac{1}{E_m(k)+2R}\frac{\dd k}{|\T^*|}\\
  &\leq  \frac{4}{t^2}\int_{G_t}\sum_{m=1}^\infty\sum_{\substack{n=1\\n\ne m}}^{\infty}\frac{\|P_n(k)(H(k)+2R)U\psi(k,\cdot)\|^2}{|E_m(k)-E_n(k)|^2}\frac{1}{E_m(k)+2R}\frac{\dd k}{|\T^*|}\\
  &\leq \frac{4}{t^2\epsilon_t^2}\int_{G_t}\sum_{m=1}^\infty\frac{\|(H(k)+2R)U\psi(k,\cdot)\|^2_{L^2(\T,\dd q)}}{E_m(k)+2R}\frac{\dd k}{|\T^*|}
  \end{align*}
  by Bessel's inequality.

  Instead, for $k\in B_t$, now using 
 $ \left|\frac1t\int_0^t e^{i(E_m(k)-E_n(k))r}\dd r\right|\leq 1$,
  we have
  \begin{align*}
  &\int_{B_t}\sum_{m=1}^\infty\sum_{\substack{n=1\\n\ne m}}^{\infty}\left\| f_{m,n}P_n(k)(H(k)+2R)U\psi(k,\cdot)\right\|^2\frac{1}{E_m(k)+2R}\frac{\dd k}{|\T^*|}\\
  &\leq \int_{B_t}\sum_{m=1}^\infty\frac{\|(H(k)+2R)U\psi(k,\cdot)\|^2_{L^2(\T,\dd q)}}{E_m(k)+2R}\frac{\dd k}{|\T^*|}.
  \end{align*}

  Summarizing once again, we have
  \begin{align*}
    \left\|\left(\frac1t\int_0^tD(r)\psi\dd r\right)-Q_{H}\psi\right\|^2
    &\leq
    M_1\int_{B_t}\sum_{m=1}^\infty \frac{\|(H(k)+2R)U\psi(k,\cdot)\|^2}{E_m(k)+2R}\frac{\dd k}{|\T^*|}\\
    &+\frac{4M_1}{t^2\epsilon_t^2}\int_{G_t}\sum_{m=1}^\infty \frac{\|(H(k)+2R)U\psi(k,\cdot)\|^2}{E_m(k)+2R}\frac{\dd k}{|\T^*|},
  \end{align*}
  which after an application of Lemma~\ref{lem:1june29} becomes
  \begin{align}\label{eq:1june23}
    \begin{split}
    \left\|\left(\frac1t\int_0^tD(r)\psi\dd r\right)-Q_{H}\psi\right\|^2
    &\leq (1+3R)^2M_1M_2\int_{B_t}\sum_{m=1}^\infty \frac{1}{E_m(k)+2R}\frac{\dd k}{|\T^*|}\\
    &+\frac{4(1+3R)^2M_1M_2}{t^2\epsilon_t^2}\int_{G_t}\sum_{m=1}^\infty \frac{1}{E_m(k)+2R}\frac{\dd k}{|\T^*|}.
    \end{split}
  \end{align}

  Before summing in $m$ and integrating in $k$, we find an upper bound on $1/(E_m(k)+2R)$ which is uniform in $k$ and summable in $m$. Since $(-1)^{n+1}\frac{\dd E_n}{\dd k}>0$ on $\left(0,\frac{\pi}{p}\right)$ and $E_n(k)=E_n(-k)$, we have
  for $n\in\N$,
  \begin{align*}
  E_{2n-1}(k)+2R&\geq E_{2n-1}(0)+2R
  \geq \frac{4(n-1)^2\pi^2}{p^2}+R\geq \frac{\pi^2}{p^2}(4(n-1)^2+R)
  \end{align*}
  by the min-max theorem and since
  $p\geq \pi$.
  Similarly,
  \begin{align*}
  E_{2n}(k)+2R\geq E_{2n}(\pi/p)+2R\geq \frac{(2n-1)^2\pi^2}{p^2}+R\geq \frac{\pi^2}{p^2}((2n-1)^2+R),
  \end{align*}
  so that
  \begin{align*}
    \sum_{m=1}^\infty\frac{1}{E_m(k)+2R}\leq \frac{p^2}{\pi^2}A,\quad \text{for}\; A=A(R):=\left(\sum_{n=1}^\infty\frac{1}{4(n-1)^2+R}+\frac{1}{(2n-1)^2+R}\right).
  \end{align*}
  Finally, we note that by Lemma~\ref{lem:DOSCont},
  \begin{align*}
    \leb(B_t)\leq 4\sqrt{C_2\pi}\exp(C_2R^{1/2}p)\sqrt{\epsilon_t}=
   4\exp(C_2R^{1/2}p)t^{-2/5}.
  \end{align*}
  With these facts in hand, and defining $M=M(R):=4(1+3R)^2M_1A$, we may bound the integrals in \eqref{eq:1june23} as follows:
  \begin{align*}
  \left\|\left(\frac1t\int_0^tD(r)\psi\dd r\right)-Q_{H}\psi\right\|^2
  &\leq \frac{p^2}{\pi^2} M\cdot M_2\left( \frac{p}{2\pi}\leb(B_t)+\frac{1}{\epsilon_t^2t^2}\right)\\
  &\leq \frac{p^2}{\pi^2}M\cdot M_2\left( \frac{2p}{\pi}\exp\left(C_2R^{1/2}p\right)+C_2^2\pi^2\right)t^{-2/5}\\
  &\leq \frac{M\cdot M_2}{\pi^2}\left( \frac2{\pi}+C_2^2\pi^2\right)p^3\exp(C_2R^{1/2}p)t^{-2/5}
  \end{align*}
  since $\frac{p}{2\pi}\geq \frac14$.
  The result then follows from taking a square root and setting $M_3=M_2^{1/2}$ and $C_1=\frac{M^{1/2}}{\pi}\left( \frac{2}{\pi}+C_2^2\pi^2\right)^{1/2}$.
  \end{proof}

  \section{Proof of Ballistic transport}

  In this section, we will prove Theorem~\ref{thm:main}. First, we must prove some final lemmas. The first is an elementary propagation estimate, which we record below. 

  \begin{lemma}\label{lem:generalprop}
  For $H_i=-\frac{d^2}{dx^2}+V_i$, and $V_i\in L^\infty(\R)$,
  \begin{align}
  \|e^{itH_1}-e^{itH_2}\|\leq |t|\|V_1-V_2\|_\infty.
  \end{align}
  \end{lemma}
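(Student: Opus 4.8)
The plan is to use the standard Duhamel (variation of parameters) identity relating the two unitary groups. Since $H_1$ and $H_2$ share the domain $H^2(\R)$ and their difference $H_1 - H_2 = V_1 - V_2$ is a bounded self-adjoint operator with $\|V_1-V_2\| = \|V_1-V_2\|_\infty$, I would first fix $\psi \in H^2(\R)$ (a common core) and consider the function $r \mapsto e^{i(t-r)H_1} e^{irH_2}\psi$ for $r \in [0,t]$. This is strongly differentiable on $H^2(\R)$, with derivative $e^{i(t-r)H_1}\bigl(-iH_1 + iH_2\bigr)e^{irH_2}\psi = -i\, e^{i(t-r)H_1}(V_1-V_2)e^{irH_2}\psi$. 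Integrating from $0$ to $t$ gives the Duhamel formula
\begin{align*}
e^{itH_2}\psi - e^{itH_1}\psi = -i\int_0^t e^{i(t-r)H_1}(V_1-V_2)e^{irH_2}\psi\,\dd r.
\end{align*}

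From here the estimate is immediate: taking norms inside the integral, using unitarity of $e^{i(t-r)H_1}$ and $e^{irH_2}$ and the bound $\|(V_1-V_2)\varphi\| \le \|V_1-V_2\|_\infty\|\varphi\|$, one gets
\begin{align*}
\|e^{itH_2}\psi - e^{itH_1}\psi\| \le \int_0^{|t|} \|V_1-V_2\|_\infty\|\psi\|\,\dd r = |t|\,\|V_1-V_2\|_\infty\|\psi\|,
\end{align*}
where for $t<0$ one runs the same argument with the orientation of the interval reversed (or simply notes the claim is symmetric under $t\mapsto -t$ after swapping the roles appropriately). Since $H^2(\R)$ is dense in $L^2(\R)$ and both $e^{itH_1}$, $e^{itH_2}$ are bounded, the inequality $\|(e^{itH_1}-e^{itH_2})\psi\| \le |t|\|V_1-V_2\|_\infty\|\psi\|$ extends to all $\psi\in L^2(\R)$, which is exactly the claimed operator-norm bound.

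I do not anticipate a genuine obstacle here; the only point requiring a word of care is the justification that $r\mapsto e^{i(t-r)H_1}e^{irH_2}\psi$ is differentiable with the stated derivative — this uses that $\psi$ lies in the common domain $H^2(\R)$ so that both generators act on it and Stone's theorem applies, plus the product rule for strongly differentiable operator-valued functions (the cross terms $iH_1$ and $-iH_2$ from differentiating each factor combine into $i(H_2-H_1)$ sandwiched between the two groups). Everything else is a one-line norm estimate and a density argument.
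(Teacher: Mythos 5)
Your proposal is correct and uses essentially the same argument as the paper: both are the standard Duhamel identity obtained by differentiating an interpolating one-parameter family (the paper differentiates $r\mapsto e^{-irH_1}e^{irH_2}\varphi$, you differentiate $r\mapsto e^{i(t-r)H_1}e^{irH_2}\psi$, which is the same computation after factoring out a unitary), then applying unitarity and density of $H^2(\R)$. There is no substantive difference.
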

  \begin{proof}
  Let $\varphi\in H^2(\R)$. Using strong differentiability of $(I-e^{-itH_1}e^{itH_2})\varphi$, we have by the fundamental theorem of calculus and unitarity of $e^{itH_i}$,
  \begin{align*}
  \|(e^{itH_1}-e^{itH_2})\varphi\|&=\left\|\int_0^te^{-irH_1}(V_1-V_2)e^{irH_2}\varphi\dd r\right\|\\
  &\leq \left|\int_0^t\|V_1-V_2\|_\infty\|\varphi\|\dd r \right|\\
  &= |t|\|V_1-V_2\|_\infty\|\varphi\|.
  \end{align*}
  Since $H^2(\R)$ is dense in $L^2(\R)$, the result follows by approximation.
  \end{proof}

  We will also require the following estimate showing that the difference of the Heisenberg evolution of the position operator for two Schr\"odinger operators with bounded potentials applied to a suitable state can diverge at most quadratically in time. As we will soon see, the presence of the normed difference of the two potentials on the right hand side of the estimate is critical to the proof of Theorem~ \ref{thm:main}.

  \begin{lemma}\label{lem:posprop}
  Let $H_i$ be as in Lemma~\ref{lem:generalprop}, and suppose $V_1-V_2\in L^\infty(\R)$. Then, for $\psi\in H^2(\R)\cap D(X^2)$ and $t\in\R$,
  \[
  \|X_{H_1}(t)\psi-X_{H_2}(t)\psi\|\leq \Gamma(|t|^{1/2}+t^2)\|V_1-V_2\|_{\infty}^{1/2}
  \]
  for $\Gamma=\Gamma(R,\psi)$ a constant.
  \end{lemma}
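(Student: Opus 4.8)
The plan is to start from the algebraic identity
\[
X_{H_1}(t)\psi-X_{H_2}(t)\psi = e^{itH_1}X\bigl(e^{-itH_1}-e^{-itH_2}\bigr)\psi + \bigl(e^{itH_1}-e^{itH_2}\bigr)Xe^{-itH_2}\psi ,
\]
which is legitimate since $\psi\in H^2(\R)\cap D(X^2)\subset H^1(\R)\cap D(X)$, so by \eqref{eq:integral} we have $e^{-itH_i}\psi\in D(X)$ and both summands lie in $L^2(\R)$. By unitarity of $e^{itH_1}$ and $e^{itH_2}$, their norms equal $\|X(e^{-itH_1}-e^{-itH_2})\psi\|$ and $\|(e^{itH_1}-e^{itH_2})Xe^{-itH_2}\psi\|$ respectively, and I would estimate these separately; here $R$ is understood to bound $\|V_1\|_\infty$ and $\|V_2\|_\infty$.

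For the second summand, $X$ sits behind a bounded operator, so it is at most $\|e^{itH_1}-e^{itH_2}\|\cdot\|X_{H_2}(t)\psi\|$. Lemma~\ref{lem:generalprop} controls the first factor by $|t|\,\|V_1-V_2\|_\infty$. For the second I would use \eqref{eq:integral}, $X_{H_2}(t)\psi=X\psi+\int_0^tD_2(r)\psi\,\dd r$, together with conservation of energy: $\|D_2(r)\psi\|^2=\|De^{-irH_2}\psi\|^2=\langle e^{-irH_2}\psi,(H_2-V_2)e^{-irH_2}\psi\rangle\le\langle\psi,H_2\psi\rangle+R\|\psi\|^2\le\|\psi'\|^2+2R\|\psi\|^2$, whence $\|X_{H_2}(t)\psi\|\le\|X\psi\|+|t|(\|\psi'\|^2+2R\|\psi\|^2)^{1/2}$ (this linear-in-$t$ first-moment bound can equally be quoted from the appendix). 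Using $\|V_1-V_2\|_\infty\le(2R)^{1/2}\|V_1-V_2\|_\infty^{1/2}$ and $|t|+t^2\le 2(|t|^{1/2}+t^2)$, the second summand is bounded by $\Gamma_1(R,\psi)(|t|^{1/2}+t^2)\|V_1-V_2\|_\infty^{1/2}$.

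The first summand is the delicate one: $X$ acts \emph{after} a difference of unitaries, so its unboundedness cannot be absorbed into an operator norm. The key step is interpolation. With $\phi:=(e^{-itH_1}-e^{-itH_2})\psi$, Cauchy--Schwarz gives $\|X\phi\|^2=\int_\R|x|^2|\phi(x)|^2\,\dd x\le\|\phi\|\,\|X^2\phi\|$, i.e. $\|X\phi\|\le\|\phi\|^{1/2}\|X^2\phi\|^{1/2}$. For $\|\phi\|$ I would apply Lemma~\ref{lem:generalprop} again, getting $\|\phi\|\le|t|\,\|V_1-V_2\|_\infty\|\psi\|$. For $\|X^2\phi\|$ I use no smallness at all, only the a priori bound $\|X^2e^{-itH_i}\psi\|\le K(R,\psi)(1+t^2)$ provided by the estimates of the appendix (which also ensure $e^{-itH_i}\psi\in D(X^2)$ for $\psi\in H^2(\R)\cap D(X^2)$), so that $\|X^2\phi\|\le 2K(R,\psi)(1+t^2)$. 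Combining, the first summand is $\le C\,|t|^{1/2}(1+t^2)^{1/2}\|V_1-V_2\|_\infty^{1/2}$, and since $|t|^{1/2}(1+t^2)^{1/2}\le|t|^{1/2}+|t|^{3/2}\le 2(|t|^{1/2}+t^2)$, it is $\le\Gamma_2(R,\psi)(|t|^{1/2}+t^2)\|V_1-V_2\|_\infty^{1/2}$. Adding the two estimates and setting $\Gamma=\Gamma_1+\Gamma_2$ finishes the proof.

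I expect the interpolation to be the only nonroutine ingredient: it trades off the $L^2$-smallness of $(e^{-itH_1}-e^{-itH_2})\psi$, which is $O(|t|\,\|V_1-V_2\|_\infty)$, against the crude, potential-independent, at-most-quadratic-in-time growth of its second position moment, and this trade-off is exactly what produces both the half-power on $\|V_1-V_2\|_\infty$ and the $t^2$ in the statement.
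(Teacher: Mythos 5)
Your proof is correct and uses essentially the same ideas as the paper: a telescoping decomposition into two terms, Lemma~\ref{lem:generalprop} for the term where the unitary difference is in operator-norm position, and the Cauchy--Schwarz interpolation $\|X\phi\|\le\|\phi\|^{1/2}\|X^2\phi\|^{1/2}$ combined with the appendix moment bounds for the delicate term. The only difference is cosmetic: the paper regularizes with $F_\epsilon(X)=X/(1+\epsilon|X|)$, performs the same Cauchy--Schwarz step in the bounded setting, and passes to the limit $\epsilon\searrow 0$ at the end, whereas you work with $X$ directly after observing that the appendix guarantees $e^{-itH_i}\psi\in D(X^2)$, which makes each step well-defined from the outset.
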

  \begin{proof}
  Let
  \[
  \eta_\psi(t):=X_{H_1}(t)\psi-X_{H_2}(t)\psi
  \]
  for $\psi\in H^2(\R)\cap D(X^2)$.

  We define $F_\epsilon(x)=\frac{x}{1+\epsilon|x|}$ and 
  \[
  \eta_{\psi}^\epsilon:=e^{itH_1}F_\epsilon(X)e^{-itH_1}\psi-e^{itH_2}F_\epsilon(X)e^{-itH_2}\psi,\] 
  which we will estimate above for each $\epsilon>0$ and take $\epsilon\searrow 0$.

  We note that for $i=1,2$,
  \begin{align*}
  \|e^{itH_i}F_\epsilon(X)e^{-itH_i}\psi-e^{itH_i}Xe^{-itH_i}\psi\|^2&=\int_{\R}(F_\epsilon(x)-x)^2|(e^{-itH_i}\psi)(x)|^2\dd x\\
  &=\int_\R\frac{\epsilon^2x^4}{(1+\epsilon|x|)^2}|(e^{-itH_i}\psi)(x)|^2\dd x.
  \end{align*}

  Since for $\epsilon\leq 1$, the integrand is bounded above by $x^4|(e^{-itH_i}\psi)(x)|^2$, which is integrable for each $t\in \R$ by \cite{RadinSimon}[Theorem 2.2], we have
  \begin{align}
  \lim_{\epsilon\searrow 0}\|e^{itH_i}F_\epsilon(X)e^{-itH_i}\psi-e^{itH_i}Xe^{-itH_i}\psi\|=0
  \end{align}
  by the dominated convergence theorem. In particular, 
 $ \lim_{\epsilon\searrow 0}\eta_{\psi}^\epsilon(t)=\eta_{\psi}(t).$

  We find by the triangle inequality and Lemma~\ref{lem:generalprop},
  \begin{align*}
  \| \eta_\psi^\epsilon(t)\|\leq |t|\|V_1-V_2\|_\infty\|F_\epsilon(X)e^{-itH_1}\psi\|+
  \|F_\epsilon(X)(e^{-itH_1}-e^{-itH_2})\psi\|.
  \end{align*}
  Using the Cauchy-Schwarz inequality and that $F_\epsilon(X)$ is a bounded self-adjoint operator, we have
  \begin{align*}
  \|F_\epsilon(X)(e^{-itH_1}-e^{-itH_2})\psi\|^2&\leq
  \|e^{-itH_1}-e^{-itH_2}\| \|\psi\| \cdot \|F_\epsilon(X)^2(e^{-itH_1}-e^{-itH_2})\psi\|\\
  &\leq |t|\|V_1-V_2\|_\infty\|\psi\|\left(\| F_\epsilon(X)^2e^{-itH_1}\psi\|+\|F_\epsilon(X)^2e^{-itH_2}\psi\|\right)
  \end{align*}
  by Lemma~\ref{lem:generalprop}. Thus, taking $\epsilon\searrow 0$ and using the monotone convergence theorem for the terms on the right hand side, we have
  \begin{align*}
  \|\eta_{\psi}(t)\|&\leq |t|\|V_1-V_2\|_\infty\|Xe^{-itH_1}\psi\|\\
  &+|t|^{1/2}\|V_1-V_2\|_{\infty}^{1/2}\|\psi\|^{1/2}
  \left(\|X^2e^{-itH_1}\psi\|+\|X^2e^{-itH_2}\psi\|\right)^{1/2}  \\
  &\leq \Gamma(|t|^{1/2}+t^2)\|V_1-V_2\|_{\infty}^{1/2}
  \end{align*}
  where $\Gamma=\Gamma(R,\psi)$ is a constant by Theorems \ref{thm:RadinSimon1} and \ref{thm:RadinSimon2}.
  \end{proof}

	Before proving Theorem~\ref{thm:main}, we prove a final lemma, which provides a lower bound on the Floquet transform $U_{p_n}\psi$ integrated over a subset of $\mathcal{B}_n:=\left(-\frac{\pi}{p_n},\frac{\pi}{p_n} \right]$ for large enough periods $p_n$. To emphasize the dependence on $n$, we also denote $\T_n:=\R/p_n\Z$ and $\T^*_n:=\R/\frac{2\pi}{p_n}\Z$. 

  \begin{lemma}\label{lem:fourierbound}
  Let $\psi\in H^1(\R)\cap \dom(|X|^s)$ for $s>2$ and define the set $S_n:=\left[-\frac{3\pi}{4p_n},-\frac{\pi}{4p_n} \right]\cup \left[\frac{\pi}{4p_n},\frac{3\pi}{4p_n} \right]$. Then, there exists an $N_0(\psi)$ so that
  for $n\geq N_0(\psi)$,
  \begin{align}\label{eq:fourierbound}
  \int_{S_n}\|U_{p_n}\psi(k,\cdot)\|^2\frac{\dd k}{|\T_n^*|}\geq \frac{1}{8}\|\psi\|^2.
  \end{align}
  \end{lemma}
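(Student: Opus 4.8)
The plan is to exploit the Plancherel-type identity for the Floquet transform together with the fact that, after rescaling, $U_{p_n}\psi(k,\cdot)$ converges in an appropriate sense to (a unimodular multiple of) the ordinary Fourier transform $\hat\psi$ concentrated near $k$. More concretely, I would first record that unitarity of $U_{p_n}$ gives
\begin{align*}
\|\psi\|^2=\int_{\T_n^*}\|U_{p_n}\psi(k,\cdot)\|_{L^2(\T_n)}^2\frac{\dd k}{|\T_n^*|},
\end{align*}
so \eqref{eq:fourierbound} is the statement that at least an eighth of this mass sits over the ``middle'' set $S_n$. The natural route is to compute $\|U_{p_n}\psi(k,\cdot)\|^2_{L^2(\T_n)}$ explicitly. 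Using the representative from Lemma~\ref{lem:1june30}, write $U_{p_n}\psi(k,q)=\sum_{\ell}e^{-ik(q+p_n\ell)}\psi(q+p_n\ell)$; expanding the square and integrating over $q\in[0,p_n]$ collapses (by an affine change of variables and absolute convergence, valid for $\psi\in H^1\cap\dom(|X|^s)$, $s>2$) to
\begin{align*}
\|U_{p_n}\psi(k,\cdot)\|_{L^2(\T_n)}^2=\sum_{\ell\in\Z}\int_\R\psi(x)\overline{\psi(x+p_n\ell)}\,e^{ikp_n\ell}\,\dd x=\sum_{\ell\in\Z}e^{ikp_n\ell}\,\rho_\psi(p_n\ell),
\end{align*}
where $\rho_\psi(y):=\int_\R\psi(x)\overline{\psi(x+y)}\,\dd x$ is the autocorrelation of $\psi$; the $\ell=0$ term is $\|\psi\|^2$.

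The next step is to compare this with its $\ell=0$ term alone. We have
\begin{align*}
\Big|\,\|U_{p_n}\psi(k,\cdot)\|_{L^2(\T_n)}^2-\|\psi\|^2\,\Big|\le\sum_{\ell\ne 0}|\rho_\psi(p_n\ell)|,
\end{align*}
and since $\rho_\psi\in L^1(\R)$ with $\rho_\psi$ continuous — indeed, for $\psi\in\dom(|X|^s)\subset L^1(\R)$ one gets quantitative decay $|\rho_\psi(y)|\le C(1+|y|)^{-s}$ via Lemma~\ref{lem:pointwise} and a tail estimate — the right-hand side is $\sum_{\ell\ne0}|\rho_\psi(p_n\ell)|\le C\sum_{\ell\ne0}(1+p_n|\ell|)^{-s}$, which tends to $0$ as $p_n\to\infty$ because $s>2>1$. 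Hence there is $N_0(\psi)$ such that for $n\ge N_0(\psi)$,
\begin{align*}
\Big|\,\|U_{p_n}\psi(k,\cdot)\|_{L^2(\T_n)}^2-\|\psi\|^2\,\Big|\le\tfrac14\|\psi\|^2\qquad\text{for all }k\in\T_n^*.
\end{align*}
With this uniform bound, $\|U_{p_n}\psi(k,\cdot)\|_{L^2(\T_n)}^2\ge\frac34\|\psi\|^2$ for every $k$, and since $\leb(S_n)=\frac{\pi}{p_n}=\frac12|\T_n^*|$, integrating over $S_n$ and dividing by $|\T_n^*|$ gives $\int_{S_n}\|U_{p_n}\psi(k,\cdot)\|^2\,\frac{\dd k}{|\T_n^*|}\ge\frac12\cdot\frac34\|\psi\|^2=\frac38\|\psi\|^2\ge\frac18\|\psi\|^2$, which is the claim (in fact with room to spare).

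The main obstacle I anticipate is purely the bookkeeping needed to justify the interchange of summation and integration in the computation of $\|U_{p_n}\psi(k,\cdot)\|_{L^2(\T_n)}^2$ and to obtain the autocorrelation decay bound with an explicit, $n$-independent rate; both follow from Lemma~\ref{lem:1june30} (absolute uniform convergence of the defining series) and Lemma~\ref{lem:pointwise} (pointwise decay $|\psi(x)|\lesssim(1+|x|)^{-s/2}$), so the estimate $|\rho_\psi(y)|\lesssim(1+|y|)^{-s}$ is really a short Cauchy–Schwarz/splitting argument. Everything else — Plancherel for $U_{p_n}$, the measure of $S_n$, the final arithmetic — is routine. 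One mild point to be careful about: the value of $N_0(\psi)$ must be chosen so that $p_n$ is large for $n\ge N_0$, which is fine since $p_n\to\infty$ by the definition of a limit-periodic potential with $p_{n+1}/p_n\in\N$, $p_n\ne p_{n+1}$.
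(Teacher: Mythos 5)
Your proof is correct but follows a genuinely different route from the paper's. The paper keeps only the two dominant terms $\ell=0,-1$ of the Floquet series, shows the remainder vanishes uniformly in $k$, and then estimates $\int_{S_n}\|\psi(q)+e^{ikp_n}\psi(q-p_n)\|^2\frac{\dd k}{|\T_n^*|}$ directly via Fubini; there the particular choice of $S_n$ is essential, because after the change of variables $u=p_nk$ the set $S=p_nS_n=[-3\pi/4,-\pi/4]\cup[\pi/4,3\pi/4]$ satisfies $\int_Se^{iu}\dd u=0$, which kills the cross term. You instead compute $\|U_{p_n}\psi(k,\cdot)\|_{L^2(\T_n)}^2$ exactly as the autocorrelation sum $\sum_\ell e^{ikp_n\ell}\rho_\psi(p_n\ell)$, observe that the $\ell=0$ term is $\|\psi\|^2$ and the rest tends to $0$ \emph{uniformly in $k$}, and then only use that $\leb(S_n)=\tfrac12|\T_n^*|$. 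This is a real conceptual gain: your argument shows the specific geometry of $S_n$ is irrelevant (any subset of fixed relative measure works), it yields the stronger constant $\tfrac38$ rather than $\tfrac18$, and the uniform-in-$k$ statement ``$\|U_{p_n}\psi(k,\cdot)\|^2\to\|\psi\|^2$'' is a clean and memorable intermediate fact. The price is that you must justify the rearrangement $\sum_{\ell,m}\mapsto\sum_j\sum_m$ in the double sum, which requires the absolute summability furnished by Lemma~\ref{lem:1june30} (you note this). One genuine slip to correct: Lemma~\ref{lem:pointwise} gives $|\psi(x)|\lesssim(1+|x|^{s/2})^{-1}$, which by the standard split $|x|\lessgtr|y|/2$ yields only $|\rho_\psi(y)|\lesssim(1+|y|)^{-s/2}$, not $(1+|y|)^{-s}$ as you wrote. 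Fortunately $s>2$ implies $s/2>1$, so $\sum_{\ell\ne0}(1+p_n|\ell|)^{-s/2}\lesssim p_n^{-s/2}\zeta(s/2)\to0$ still holds and your argument goes through unchanged; only the stated exponent should be fixed.
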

  \begin{proof}
	By Lemma~\ref{lem:1june30}, there is a set $Z_n\subset \T_n^*$ with $\leb(Z_n)=0$ and such that for a given $k\in \T_n^*\setminus Z_n$, 
    \begin{align*}
    U_{p_n}\psi(k,q)=e^{-ikq}(\psi(q)+e^{ikp_n}\psi(q-p_n))+\sum_{\ell\not\in\{0,-1\}}e^{-ik(q+p_n\ell)}\psi(q+p_n\ell)
    \end{align*}
    for a.e. $q$.
    We will use the first two terms in the above to estimate the integral in \eqref{eq:fourierbound} from below. For these $k$ and $q$,
    \begin{align}
    |U_{p_n}\psi(k,q)-e^{-ikq}(\psi(q)+e^{ikp_n}\psi(q-p_n))|\leq \sum_{\ell\not\in\{0,-1\}}|\psi(q+p_n\ell)|,
    \end{align}
    and by \eqref{eq:july29}
    \begin{align*}
    |\psi(q+p_n\ell)|\leq \frac{C}{p_n}\left(\frac{1}{|\ell|^{s/2}}+\frac{1}{|\ell+1|^{s/2}} \right)
    \end{align*}
    for
    $\ell\not\in\{0,-1\}$. Thus, since $p_n\to \infty$ as $n\to \infty$,
    \begin{align}\label{eq:1june13}
    \lim_{n\to \infty}\sup_{k\in \T^*_n\setminus Z_n}\|U_{p_n}\psi(k,q)-e^{-ikq}(\psi(q)+e^{ikp_n}\psi(q-p_n))\|^2_{L^2(\T_n,\dd q)}=0.
    \end{align}

    By Fubini's theorem and the change of variables $p_nk\mapsto k$, for $S:=p_nS_n$,
    \begin{align*}
      &\int_{S_n}\|\psi(q)+e^{ikp_n}\psi(q-p_n)\|^2\frac{\dd k}{|\T^*_n|}\\
      &=
      \int_{\T_n}\int_{S}|\psi(q)+e^{ik}\psi(q-p_n)|^2\frac{\dd k}{2\pi}\dd q\\
      &=
      \int_{\T_n}\int_{S}|\psi(q)|^2+|\psi(q-p_n)|^2+2\Re(\overline{\psi}(q)e^{ik}\psi(q-p_n))\frac{\dd k}{2\pi}\dd q\\
      &=\frac12\int_{\T_n}|\psi(q)|^2+|\psi(q-p_n)|^2\dd q,
    \end{align*}
    since
    \begin{align*}
    \int_S\Re(\overline{\psi(q)}e^{ik}\psi(q-p_n))\dd k=
    \Re\left(\overline{\psi(q)}\psi(q-p_n)\int_Se^{ik}\dd k\right)=0.
    \end{align*}
    Thus,
    \begin{align}\label{eq:2june13}
    \int_{S_n}\|\psi(q)+e^{ikp_n}\psi(q-p_n)\|^2\frac{\dd k}{|\T^*_n|}= \frac{1}{2}\int_{-p_n}^{p_n}|\psi(q)|^2\dd q.
    \end{align}

    Since $p_n\to \infty$, $\lim_{n\to\infty}\int_{-p_n}^{p_n}|\psi(q)|^2\dd q=\|\psi\|^2$, and we may use the triangle inequality to combine \eqref{eq:1june13} and \eqref{eq:2june13} to find \eqref{eq:fourierbound}.

  \end{proof}

  We are now ready to prove Theorem~\ref{thm:main}. The proof will largely follow the path laid out by Fillman in  \cite{Fillman}. Indeed, to extract the limit $Q\psi$ in Theorem~\ref{thm:main}, we define
  \begin{align}\label{eq:july20}
  Q_n\psi:=\lim_{t\to\infty}\frac1tX_{H_n}(t)\psi,
  \end{align}
  for $H_n=-\frac{\dd^2}{\dd x^2}+V_n$ where $V_n$ approximate $V$ exponentially quickly, and show $Q_n\psi$ has a limit, denoted $Q\psi$. We then show that $Q\psi=\lim_{t\to\infty}X_{H}(t)\psi$. Finally, to conclude $Q\psi\ne 0$, we show $Q_n\psi\to Q\psi$ faster than $\|Q_n\psi\|$ can tend to $0$.

  \begin{proof}[Proof of Theorem~\ref{thm:main}]

  Fix $s>2$ and a $V$ such that $\|V\|_\infty\leq R$ and $V\in EC(\eta)$ with $\eta> 12\kappa C_2R^{1/2}$ for a constant $\kappa\geq 9/2$. Let $V_n$ with $V_n(\cdot +p_n)=V_n(\cdot)$ be such that
  \begin{align}\label{eq:expapprox}
  \lim_{n\to \infty}e^{\eta p_{n+1}}\|V_n-V\|_\infty=0.
  \end{align}
  By enlarging $R$ if necessary, we may assume $\|V_n\|_\infty\leq R$ for each $n\in \N$.

  Let $\psi\in \mathcal{D}_s$. Defining $H_n=-\frac{\dd^2}{\dd x^2}+V_n$ with domain $H^2(\R)$, we denote $Q_n\psi$ as in \eqref{eq:july20}
  where the limit exists for $\psi\in H^1(\R)\cap \dom(X)\subset \mathcal{D}_s$ by \cite{AschKnauf} (or Theorem~\ref{thm:Quantper}).

  We find a limit for the $Q_n\psi$ using a telescoping sum; namely we show that
  \begin{align}\label{eq:summable}
  \sum_{n=1}^\infty\|Q_{n+1}\psi-Q_n\psi\|<\infty.
  \end{align}
  To this end, we set
  \[
  t_n=C_1^{5}p_{n+1}^{15/2}\exp\left(5\kappa C_2R^{1/2}p_{n+1}\right).
  \]
  Since $p_n\to \infty$, we may take $p_n\geq \pi$ throughout what follows.
  Then, denoting by $D_n(t)=e^{itH_n}De^{-itH_n}$, we have by Theorem~\ref{thm:Quantper},
  \begin{align}\label{eq:1june10}
  \begin{split}
  \left\|\frac12Q_{n+1}\psi-\frac{1}{t_{n}}\int_0^{t_{n}}D_{n+1}(r)\psi\dd r \right\|&\leq C_1M_3p_{n+1}^{3/2}\exp\left(\frac12C_2R^{1/2}p_{n+1}\right)t_{n}^{-1/5}\\
  &=M_3\exp\left(\frac12\left(1-2\kappa\right)C_2R^{1/2}p_{n+1} \right).
  \end{split}
  \end{align}
  Similarly,
  \begin{align}\label{eq:2june10}
  \begin{split}
  \left\|\frac12Q_n\psi-\frac{1}{t_n}\int_0^{t_n}D_n(r)\psi\dd r \right\|&\leq
  C_1M_3p_n^{3/2}\exp\left(\frac12C_2R^{1/2}p_n\right)t_n^{-1/5}\\
  &\leq M_3\exp\left(\frac12\left(1-2\kappa\right)C_2R^{1/2}p_{n+1} \right),
  \end{split}
  \end{align}
  using that $p_{n+1}>p_n$ for all $n\in \N$.
 We find an $N_0(R)$ large enough to ensure $t_n\geq 1$ for $n\geq N_0(R)$. Then, by \eqref{eq:integral} and Lemma~\ref{lem:posprop},
  \begin{align}\label{eq:3june10}
  \begin{split}
  \frac{1}{t_n}\left\| \int_0^{t_n}D_n(r)\psi-D_{n+1}(r)\psi\dd r \right\|&\leq \Gamma(1/\sqrt{t_n}+t_n)\| V_n-V_{n+1}\|_{\infty}^{1/2}\\
  &\leq 2\Gamma t_n\| V_n-V_{n+1}\|_{\infty}^{1/2}
  \end{split}
  \end{align}
  for $n\geq N_0(R)$.

  Summing \eqref{eq:1june10}, \eqref{eq:2june10}, we find,
  \begin{align*}
  &\left\|\frac12Q_n\psi-\frac{1}{t_n}\int_0^{t_n}D_n(r)\psi\dd r \right\|+
  \left\|\frac12Q_{n+1}\psi-\frac{1}{t_{n}}\int_0^{t_{n}}D_{n+1}(r)\psi\dd r \right\|\\
  &\leq 2M_3\exp\left(\frac12\left(1-2\kappa\right)C_2R^{1/2}p_{n+1} \right)\leq
  2M_3\exp\left(-4C_2R^{1/2}p_{n+1} \right)
  \end{align*}
  where the last inequality uses $\kappa\geq 9/2$.

  By \eqref{eq:expapprox}, for $n$ large, we have
  $\|V_n-V_{n+1}\|_\infty^{1/2}\leq e^{-\frac{\eta}{2}p_{n+1}}$.
  So, by enlarging $N_0(R)$ if necessary, \eqref{eq:3june10} yields for $n\geq N_0(R)$,
  \begin{align*}
  \frac{1}{t_n}\left\| \int_0^{t_n}D_n(r)\psi-D_{n+1}(r)\psi\dd r \right\|&\leq 2\Gamma t_ne^{-\frac{\eta}{2}p_{n+1}}\leq
  2\Gamma t_n\exp\left(-6\kappa C_2R^{1/2}p_{n+1}\right)\\
  &= 2\Gamma C_1^5p_{n+1}^{15/2}\exp\left(-\kappa C_2R^{1/2}p_{n+1}\right)
  \end{align*}
  since $\eta> 12\kappa C_2R^{1/2}$. Increasing $N_0(R)$ again, we may take
  \begin{align*}
  2C_1^5p_{n+1}^{15/2}\exp\left(-\kappa C_2R^{1/2}p_{n+1}\right)\leq \exp\left(-\frac23\kappa C_2R^{1/2}p_{n+1}\right),
  \end{align*}
  so that for $n\geq N_0(R)$,
  \begin{align*}
    \frac{1}{t_n}\left\| \int_0^{t_n}D_n(r)\psi-D_{n+1}(r)\psi\dd r \right\|&\leq \Gamma \exp\left(-3 C_2R^{1/2}p_{n+1}\right)
  \end{align*}
  since $\kappa\geq 9/2$.

  Thus, for $\Lambda(\psi,R)=\Lambda:=2(\Gamma+2M_3)$ and $n\geq N_0(R)$, we have
  \begin{align}\label{eq:20231}
    \begin{split}
      \|Q_n\psi-Q_{n+1}\psi\|&\leq \Lambda\exp\left(-3C_2R^{1/2}p_{n+1} \right),
    \end{split}
  \end{align}
  and iterating the inequality $p_{n+1}\geq 2p_n$ in the above, we find
  \begin{align}\label{eq:June11}
  \|Q_n\psi-Q_{n+1}\psi\|&\leq \Lambda\exp\left(-3C_2R^{1/2}2^{n}p_{0} \right)
  \end{align}
  from which \eqref{eq:summable} follows.

  Thus, for each $\psi\in \mathcal{D}_s$, $\lim_{n\to \infty}Q_n\psi$ exists, and we define $Q\psi$ to be the limit. We now show that for these $\psi$, $\lim_{t\to\infty}\frac1tX_H(t)\psi=Q\psi$. Using the triangle inequality, we have for $\psi\in \mathcal{D}_s$ and any $t\in\R$, $n\in \N$,
  \begin{align*}
    \left\|\frac1tX_H(t)\psi-Q\psi \right\|\leq
    \frac1t\|X_H(t)\psi-X_{H_n}\psi \|+\left\|\frac1tX_{H_n}\psi-Q_n\psi \right\|+\|Q_n\psi-Q\psi\|.
  \end{align*}
  Taking $t_n$ as in the above, for $t_{n-1}\leq t<t_n$, we have by Lemma~\ref{lem:posprop}
  \begin{align*}
    \frac1t\|X_H(t)\psi-X_{H_n}\psi \|&\leq
    \Gamma(1/\sqrt{t_{n-1}}+t_{n})\| V-V_n\|_{\infty}^{1/2}
  \end{align*}
  which tends to $0$ as $n\to \infty$ by \eqref{eq:expapprox}, and thus as $t\to \infty$.

  To bound the second term, we may use Corollary~\ref{cor:Corollary} to find 
  \begin{align*}
  \left\|\frac1tX_{H_n}(t)\psi-Q_n\psi \right\| &\leq t_{n-1}^{-1}\|X\psi\|+2C_1M_3p_n^{3/2}\exp\left(\frac12C_2R^{1/2}p_n\right)t_{n-1}^{-1/5}\\
  &=t_{n-1}^{-1}\|X\psi\|+2M_3\exp\left(\frac12(1-2\kappa)C_2R^{1/2}p_n\right),
  \end{align*}
  which, for our choice of $\kappa$, clearly tends to $0$ as $n\to \infty$, and thus as $t\to \infty$. Lastly, by the definition of $Q\psi$, $\|Q_n\psi-Q\psi\|\to 0$ as $n\to \infty$, and we may conclude that
  \[
  \lim_{t\to\infty}\frac1tX_H(t)\psi=Q\psi.
  \]

  Finally, we show that for $\psi\in \mathcal{D}_s$ with $\psi\ne 0$, $\|Q\psi\|>0$. The first step is to find a quantitative lower bound on $Q_n\psi$, using the expression defined in \eqref{eq:2023}. For a fixed $n\in\N$ and $S_n$ defined as in Lemma~\ref{lem:fourierbound}, we compute
  \begin{align*}
  \|Q_n\psi\|^2&=\int_{\T_n^*}\left\|\sum_{m=1}^\infty\frac{\dd E_m(k)}{\dd k}P_m(k)U_{p_n}\psi(k,\cdot)\ \right\|^2\frac{\dd k}{|\T_n^*|}\\
  &\geq \int_{S_n}\left\|\sum_{m=1}^\infty\frac{\dd E_m(k)}{\dd k}P_m(k)U_{p_n}\psi(k,\cdot)\right\|^2\frac{\dd k}{|\T_n^*|}\\
  &=\int_{S_n}\sum_{m=1}^\infty\left|\frac{\dd E_m(k)}{\dd k}\right|^2\|P_m(k)U_{p_n}\psi(k,\cdot)\ \|^2\frac{\dd k}{|\T_n^*|}\\
  &\geq \frac{2\exp(-4C_2R^{1/2}p_n)}{C_2^2p_n^2}\int_{S_n}\sum_{m=1}^\infty\|P_m(k)U_{p_n}\psi(k,\cdot) \|^2\frac{\dd k}{|\T_n^*|}\\
  &=\frac{2\exp(-4C_2R^{1/2}p_n)}{C_2^2p_n^2}\int_{S_n}\|U_{p_n}\psi(k,\cdot)\|^2\frac{\dd k}{|\T_n^*|}
  \end{align*}
  by Lemma~\ref{lem:velocitybound} and Parseval's identity. Using Lemma~\ref{lem:fourierbound}, we have for $n\geq N_1(\psi)$
  \begin{align}
    \|Q_n\psi\|^2
    &\geq \frac{\exp(-4C_2R^{1/2}p_n)}{4C_2^2p_n^2}\|\psi\|^2.
  \end{align}

  Now we find a bound for $\|Q_n\psi-Q\psi\|$. Using \eqref{eq:20231} and $p_{\ell+n}\geq 2^\ell p_{n}$ for each $\ell\in \N$, we may estimate for a given $n\geq N_0(R)$,
  \begin{align*}
  \|Q_n\psi-Q\psi\|&\leq \sum_{\ell=n}^\infty\|Q_{\ell}\psi-Q_{\ell+1}\psi\|\leq \Lambda\sum_{\ell=n}^\infty \exp\left( -3C_2R^{1/2}p_{\ell+1} \right)\\
  &= \Lambda\sum_{\ell=1}^\infty \exp\left( -3C_2R^{1/2}p_{n+\ell} \right)\leq \Lambda\sum_{\ell=1}^\infty \exp\left( -3C_2R^{1/2}2^{\ell}p_{n} \right)\\
  &\leq \Lambda \frac{\exp\left( -3C_2R^{1/2}p_{n}\right)}{1-\exp\left( -3C_2R^{1/2}p_n\right)}
  \end{align*}
  where we have estimated by a much larger geometric series.

  Finally, putting these two estimates together, we have for $n\geq N:=\max\{ N_0,N_1\}$,
  \begin{align*}
  \|Q\psi\|&\geq \|Q_n\psi\|-\|(Q-Q_n)\psi\|\\
  &\geq \frac{\exp\left( -2C_2R^{1/2}p_n\right)}{2C_2p_n}\|\psi\|-\Lambda(\psi,R)\frac{\exp\left( -3C_2R^{1/2}p_{n}\right)}{1-\exp\left( -3C_2R^{1/2}p_n\right)}.
  \end{align*}
  By comparing the exponents in the two terms above, it is clear that we may find an $n\geq N$ large enough such that the right hand side of the above is greater than $0$. Thus, $Q\psi\ne 0$.
  \end{proof}

\section{Appendix}

	The two results of this section are essentially the main results of \cite{RadinSimon}. However, since we must take extra care in the dependencies of the constants which appear in their estimates, we include statements tailored to our setting as well as proofs where necessary. More precisely, Theorems \ref{thm:RadinSimon1} and \ref{thm:RadinSimon2} correspond to Theorems 2.1 and 2.2 of \cite{RadinSimon}, respectively. However, in that paper, the dependencies critical in our work are not recorded. We note also that those authors work in the more general setting of $H_0$ form bounded or $H_0$ operator bounded potentials with relative bound less than $1$, while we deal with bounded potentials.

  \begin{theorem}\label{thm:RadinSimon1}
    Suppose there is an $R>0$ with $\|V\|_\infty\leq R$. Then, for $\psi\in H^1(\R)\cap \dom(X)$,
    \begin{align}\label{eq:radinsimon1}
    \|Xe^{-iHt}\psi\|\leq \alpha(1+|t|)(\|\psi\|_{H^1(\R)}^2+\|X\psi\|^2)^{1/2}
    \end{align}
    for a constant $\alpha(R)=\alpha$.
  \end{theorem}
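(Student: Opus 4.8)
The plan is to estimate $\|Xe^{-iHt}\psi\|$ by differentiating in time and controlling the growth rate, following the strategy of \cite{RadinSimon}, but tracking the dependence of all constants on $R$ only. First I would reduce to the case $\psi \in \Schwartz(\R)$ (or at least $\psi \in H^2(\R) \cap \dom(X^2)$) by a density/approximation argument so that the formal manipulations below are justified, and then pass to the general $\psi \in H^1(\R)\cap\dom(X)$ at the end by closing the estimate. For such nice $\psi$, set $\psi_t := e^{-iHt}\psi$ and consider the real-valued function $g(t) := \|X\psi_t\|^2 = \langle \psi_t, X^2 \psi_t\rangle$. A Heisenberg-type computation gives $\frac{d}{dt}\langle \psi_t, X^2\psi_t\rangle = i\langle\psi_t,[H,X^2]\psi_t\rangle$, and since $[H,X^2] = [-\frac{d^2}{dx^2},X^2] = -(2XD + 2DX)$ up to constants (with $D = -i\frac{d}{dx}$; the potential $V$ commutes with $X^2$), we get $g'(t) = c\,\Re\langle D\psi_t, X\psi_t\rangle$ for an absolute constant $c$, hence $|g'(t)| \le C\|D\psi_t\|\,\|X\psi_t\| = C\|D\psi_t\|\,g(t)^{1/2}$.

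The next step is to control $\|D\psi_t\| = \|\psi_t'\|$ uniformly in $t$. Since $H = H_0 + V$ with $\|V\|_\infty \le R$ and $H$ commutes with $e^{-iHt}$, we have $\|D\psi_t\|^2 = \langle \psi_t, H_0\psi_t\rangle = \langle\psi_t, H\psi_t\rangle - \langle\psi_t, V\psi_t\rangle = \langle\psi, H\psi\rangle - \langle\psi_t, V\psi_t\rangle \le \|D\psi\|^2 + \langle\psi,V\psi\rangle + R\|\psi\|^2 \le \|D\psi\|^2 + 2R\|\psi\|^2$, so $\|D\psi_t\| \le (\|D\psi\|^2 + 2R\|\psi\|^2)^{1/2} =: \beta(R)\,\|\psi\|_{H^1(\R)}$ where $\beta(R) = (1+2R)^{1/2}$ suffices. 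Plugging this into the differential inequality, $\big|\frac{d}{dt}g(t)^{1/2}\big| = \frac{|g'(t)|}{2 g(t)^{1/2}} \le \frac{C}{2}\|D\psi_t\| \le \frac{C}{2}\beta(R)\|\psi\|_{H^1(\R)}$, which integrates to
\begin{align*}
\|X\psi_t\| = g(t)^{1/2} \le g(0)^{1/2} + \tfrac{C}{2}\beta(R)|t|\,\|\psi\|_{H^1(\R)} = \|X\psi\| + \tfrac{C}{2}\beta(R)|t|\,\|\psi\|_{H^1(\R)}.
\end{align*}
Taking $\alpha(R)$ to absorb $\max\{1, \tfrac{C}{2}\beta(R)\}$ and using $a + b \le \sqrt{2}(a^2+b^2)^{1/2}$ gives \eqref{eq:radinsimon1}.

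The main obstacle is purely one of rigor rather than of idea: the identity $\frac{d}{dt}\langle\psi_t, X^2\psi_t\rangle = i\langle\psi_t, [H,X^2]\psi_t\rangle$ involves the unbounded operators $X^2$ and $XD$, and $X^2\psi_t$ need not stay in a fixed domain, so one cannot naively differentiate under the inner product. The standard fix, which I would carry out, is to introduce the bounded regularization $F_\epsilon(X) := X(1+\epsilon|X|)^{-1}$ (exactly as in the proof of Lemma~\ref{lem:posprop}), work with $g_\epsilon(t) := \|F_\epsilon(X)\psi_t\|^2$, establish the differential inequality for $g_\epsilon$ with constants independent of $\epsilon$ — here one checks that the commutator of $H$ with the bounded function $F_\epsilon(X)^2$ is controlled by $\|D\psi_t\|$ times a quantity bounded uniformly in $\epsilon$ — integrate, and then let $\epsilon \searrow 0$ using the monotone convergence theorem (noting $F_\epsilon(x)^2 \nearrow x^2$) together with the a priori finiteness of $\|X\psi_t\|$ for each fixed $t$ guaranteed by \cite{RadinSimon}[Theorem 2.2]. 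Once the bound holds for Schwartz $\psi$, a final approximation in the $H^1(\R)\cap\dom(X)$ norm extends it to all admissible $\psi$, since both sides are continuous in that norm.
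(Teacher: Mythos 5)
Your approach is essentially the same as the one the paper invokes: the paper itself does not write out a proof of Theorem~\ref{thm:RadinSimon1}, stating only that it ``amounts to a careful accounting of the constants in the proof of \cite{RadinSimon}[Theorem 2.1]'' (with $a=0$, $b=R$), and the technique you describe --- a Heisenberg-type differential inequality for $\|X e^{-itH}\psi\|^2$, a uniform-in-$t$ kinetic-energy bound $\|D\psi_t\|^2\le \|D\psi\|^2+2R\|\psi\|^2$, an $F_\epsilon(X)$ regularization to justify the commutator identity, and a monotone-convergence passage $\epsilon\searrow 0$ --- is precisely the Radin--Simon method, and is the method the paper does spell out in detail for the companion statement Theorem~\ref{thm:RadinSimon2} (with $F_\epsilon(y)=y^4/(1+\epsilon y^4)$ in place of your $y^2/(1+\epsilon y^2)$). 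Your constant tracking is also correct: the commutator computation gives $g'(t)=4\Re\langle D\psi_t,X\psi_t\rangle$, so $\|X\psi_t\|\le \|X\psi\|+2(1+2R)^{1/2}|t|\,\|\psi\|_{H^1}$, which yields \eqref{eq:radinsimon1} with an $\alpha$ depending only on $R$.
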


  \begin{theorem}\label{thm:RadinSimon2}
    Suppose there is an $R>0$ with $\|V\|_\infty\leq R$. Then, for $\psi\in H^2(\R)\cap D(X^2)$,
    \begin{align}\label{eq:radinsimon2}
    \|X^2e^{-iHt}\psi\|\leq \beta(1+t^2)(\|\psi\|_{H^2(\R)}^2+\|X^2\psi\|^2)^{1/2}
    \end{align}
    for a constant $\beta(R)=\beta$.
  \end{theorem}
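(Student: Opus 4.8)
The plan is to adapt the argument of \cite{RadinSimon}: produce a variation-of-parameters (Duhamel) representation for $X^2e^{-iHt}\psi$ in which the potential has been commuted away, and then close a nonlinear Gronwall estimate while tracking the constants. Write $\psi_r:=e^{-iHr}\psi$; since $\psi\in H^2(\R)=\dom(H)$ the curve $r\mapsto\psi_r$ is strongly $C^1$ with $\frac{\dd}{\dd r}\psi_r=-iH\psi_r$, and by \cite{RadinSimon}[Theorem 2.2] it remains in $H^2(\R)\cap\dom(X^2)$ with $r\mapsto X^2\psi_r$ continuous. To differentiate $X^2\psi_r$ legitimately I would first work with the bounded regularizations $G_\epsilon(X)$, $G_\epsilon(x):=x^2/(1+\epsilon x^2)$, for which
\begin{align*}
\frac{\dd}{\dd r}G_\epsilon(X)\psi_r=-iHG_\epsilon(X)\psi_r-iG_\epsilon''(X)\psi_r+2G_\epsilon'(X)D\psi_r,
\end{align*}
the crucial point being that $[G_\epsilon(X),H]=[G_\epsilon(X),-\tfrac{\dd^2}{\dd x^2}]=G_\epsilon''(X)+2iG_\epsilon'(X)D$ carries \emph{no} dependence on $V$. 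Integrating and letting $\epsilon\searrow0$ (using $|G_\epsilon'(x)|\le 2|x|$, $|G_\epsilon''(x)|\le 2$, $G_\epsilon'\to 2x$, $G_\epsilon''\to2$ pointwise, together with the dominated convergence theorem) gives, once one knows $XD\psi_r\in L^2(\R)$ for each $r$,
\begin{align*}
X^2\psi_t&=e^{-iHt}X^2\psi+\int_0^t e^{-iH(t-r)}\bigl(4XD\psi_r-2i\psi_r\bigr)\,\dd r,\\
\|X^2\psi_t\|&\le\|X^2\psi\|+2|t|\,\|\psi\|+4\int_0^{|t|}\|XD\psi_r\|\,\dd r.
\end{align*}

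The whole problem thus reduces to bounding the mixed term $\|XD\psi_r\|=\|x\psi_r'\|_{L^2(\R)}$. Here I use that several auxiliary quantities are controlled with constants depending only on $R$: conservation of $\langle\psi,H\psi\rangle$ gives $\|D\psi_r\|^2=\langle\psi_r,(H-V)\psi_r\rangle\le\langle\psi,H\psi\rangle+R\|\psi\|^2\le(1+2R)\|\psi\|_{H^1(\R)}^2$; conservation of $\|H\psi\|$ gives $\|D^2\psi_r\|=\|(H-V)\psi_r\|\le\|H\psi\|+R\|\psi\|\le(1+2R)\|\psi\|_{H^2(\R)}$; and Theorem~\ref{thm:RadinSimon1} gives $\|X\psi_r\|\le\alpha(1+|r|)(\|\psi\|_{H^1(\R)}^2+\|X\psi\|^2)^{1/2}$. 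Integrating by parts (rigorously after approximating $\psi_r$ by Schwartz functions in the graph norm of $H^2(\R)\cap\dom(X^2)$) one finds
\begin{align*}
\|XD\psi_r\|^2=-2\Re\langle\psi_r',X\psi_r\rangle-\Re\langle\psi_r'',X^2\psi_r\rangle\le 2\|D\psi_r\|\,\|X\psi_r\|+\|D^2\psi_r\|\,\|X^2\psi_r\|;
\end{align*}
in particular $XD\psi_r\in L^2(\R)$ (so the Duhamel formula above is justified), and, writing $N:=(\|\psi\|_{H^2(\R)}^2+\|X^2\psi\|^2)^{1/2}$ and $g(r):=\|X^2\psi_r\|$, the right-hand side is at most a constant depending only on $R$ times $N^2(1+|r|)+N\,g(r)$, so that $\|XD\psi_r\|\le C(R)\bigl((1+|r|)^{1/2}N+N^{1/2}g(r)^{1/2}\bigr)$.

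Substituting this into the Duhamel estimate yields, for all $t$, a Bihari-type inequality $g(t)\le A(R)\,N\,(1+t^2)+B(R)\,N^{1/2}\int_0^{|t|}g(r)^{1/2}\,\dd r$. I would close it by a continuity argument: choose $K=K(R)\ge1$ with $A(R)+B(R)K^{1/2}\le K$; then the set of $t\ge0$ on which $g(r)\le KN(1+r^2)$ holds for all $r\in[0,t]$ is nonempty, closed, and — since $\int_0^{t}(1+r^2)^{1/2}\,\dd r\le t+\tfrac12t^2<1+t^2$ — also open, hence all of $[0,\infty)$; the case $t\le0$ is identical by time reversal. This gives $\|X^2e^{-iHt}\psi\|=g(t)\le K(R)(1+t^2)N$, which is precisely \eqref{eq:radinsimon2} with $\beta=K(R)$.

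The main obstacle is exactly the control of $XD\psi_r$. The naive idea of deriving a Duhamel formula for $XD\psi_r$ as well fails, because $[XD,H]$ contains $[XD,V]=-iXV'$, and since $V$ is merely bounded $V'$ need not exist even as a function; this is why the computation is set up around $X^2$ (which commutes with $V$) and the mixed term is handled instead by the integration-by-parts identity above, which re-expresses $\|XD\psi_r\|^2$ through quantities already under control at the price of reintroducing $\|X^2\psi_r\|$ itself — this is what forces the (mild) nonlinearity in the Gronwall step and produces quadratic, rather than exponential, growth. The remaining work is bookkeeping: verifying that $A(R)$ and $B(R)$, which absorb $\alpha$, the factors $(1+2R)$, and elementary estimates such as $\|X\psi\|^2\le\tfrac12(\|\psi\|^2+\|X^2\psi\|^2)\le\tfrac12N^2$, can be taken independent of $\psi$ and of the correct homogeneity, so that $\beta$ depends on $R$ alone.
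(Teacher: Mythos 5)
Your proposal is correct in outline and yields the claimed bound with the right $R$-dependence, but it takes a genuinely different route from the paper's. The paper never leaves the regularized picture: it introduces $F_\epsilon(y)=y^4/(1+\epsilon y^4)$, tracks the scalar quantity $\langle\psi,(F_\epsilon(X(t))+1)\psi\rangle$ (which converges to $\|X^2\psi_t\|^2+\|\psi\|^2$), computes its strong derivative via the commutator $[H_0,F_\epsilon(X)]$, and closes a differential inequality of the form $\frac{\dd}{\dd t}A(t)\leq c\,A(t)^{3/4}$, taking $\epsilon\searrow 0$ only at the very end; since $F_\epsilon$ is bounded every intermediate quantity is a priori finite. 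You instead derive a Duhamel representation for $X^2\psi_t$ (through the other natural regularization $x^2/(1+\epsilon x^2)$ and the $V$-free commutator $[X^2,H_0]$), control the mixed term $\|XD\psi_r\|$ by the integration-by-parts identity $\|XD\psi_r\|^2=-2\Re\langle\psi_r',X\psi_r\rangle-\Re\langle\psi_r'',X^2\psi_r\rangle$, and close a Bihari-type integral inequality by a continuity argument. Both proofs rest on the same structural facts — conservation of kinetic energy with $R$-only constants, Theorem~\ref{thm:RadinSimon1} for the first moment, $[X^k,V]=0$, and a sub-linear power of the unknown in the Gronwall step, which is exactly what replaces the naive exponential bound by polynomial growth — so neither is more elementary, but yours is closer to the ``commute-and-iterate'' style while the paper's is a one-shot energy estimate.

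One soft spot to flag: removing $\epsilon$ from the Duhamel formula requires knowing in advance that $XD\psi_r\in L^2$ (equivalently, that $e^{-iHt}$ preserves $\dom(X^2)$), and your integration-by-parts bound reintroduces $\|X^2\psi_r\|$, so the argument as written is circular unless one first cites the qualitative version of the theorem from \cite{RadinSimon}. The paper explicitly describes the proof there as only ``formal,'' which is precisely why it re-runs the argument in a regularized way. You already have the tools to fix this cleanly: run the whole Bihari step for $g_\epsilon(t):=\|G_\epsilon(X)\psi_t\|$ at fixed $\epsilon$ (the inhomogeneity $-iG_\epsilon''(X)\psi_r+2G_\epsilon'(X)D\psi_r$ is then controlled using only $\|D\psi_r\|$, $\|D^2\psi_r\|$, $\|X\psi_r\|$, and $g_\epsilon(r)$ with constants independent of $\epsilon$), obtain $g_\epsilon(t)\leq K(R)(1+t^2)N$ uniformly in $\epsilon$, and conclude by monotone convergence. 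That version would be fully self-contained and match the paper's level of rigor.
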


	Theorem~\ref{thm:RadinSimon1} follows from taking  $a=0$ and $b=R$ in the proof of \cite{RadinSimon}[Theorem 2.1]. Instead, the proof of the statement corresponding to Theorem~\ref{thm:RadinSimon2} in \cite{RadinSimon} is formal. We provide a rigorous proof in our setting below for completeness.

  \begin{proof}[Proof of Theorem~\ref{thm:RadinSimon2}]

  Since we will reduce position estimates to momentum estimates, we begin by recording a pair of estimates of the latter type. For $\psi\in H^2(\R)$,
  \begin{align*}
  \|H_0^{1/2} e^{-itH}\psi\|^2&\leq \|(H_0+V+R)^{1/2} e^{-itH}\psi\|^2\leq \|H_0^{1/2}\psi\|^2+2R\|\psi\|^2
  \end{align*}
  where we have used that
  \begin{align*}
  \langle e^{-itH}\psi,(H_0+V)e^{-itH}\psi\rangle=
  \langle e^{-itH}\psi,e^{-itH}(H_0+V)\psi\rangle=
  \langle \psi,(H_0+V)\psi\rangle.
  \end{align*}
  In particular,
  \begin{align}\label{eq:1june15}
    \|H_0^{1/2} e^{-itH}\psi\|^2\leq (1+2R)\|\psi\|^{2}_{H^1(\R)}.
  \end{align}
  Similarly, we have
  \begin{align*}
  \|H_0e^{-itH}\psi\|\leq \|(H_0+V)e^{-itH}\psi\|+\|Ve^{-itH}\psi\|
  \leq \|H_0 \psi\|+2R\|\psi\|
  \end{align*}
  and
  \begin{align}\label{eq:2june15}
  \|H_0e^{-itH}\psi\|\leq \sqrt{2}(1+2R)\|\psi\|_{H^2(\R)}.
  \end{align}

  We now define $F_\epsilon(y)=\frac{y^4}{1+\epsilon y^4}$ and take $F_\epsilon(X(t)):=e^{iHt}F_\epsilon(X)e^{-iHt}$. Our first task is to compute the strong derivative $\frac{\dd}{\dd t}F_\epsilon(X(t))\psi$, for $\psi\in H^2(\R)$.
  Fix $\psi\in H^2(\R)$, then
  \begin{align*}
    e^{i(t+h)H}F_\epsilon(X)e^{-i(t+h)H}\psi-e^{itH}F_\epsilon(X)e^{-itH}\psi&=
    e^{i(t+h)H}F_\epsilon(X)e^{-i(t+h)H}\psi
    -e^{i(t+h)H}F_\epsilon(X)e^{-itH}\psi\\
    &+e^{i(t+h)H}F_\epsilon(X)e^{-itH}\psi
    -e^{itH}F_\epsilon(X)e^{-itH}\psi.
  \end{align*}
  We examine the difference quotient of each term individually. For the first, using unitarity of $e^{-it H}$, we have
  \begin{align*}
    \|1/h(e^{i(t+h)H}&F_\epsilon(X)e^{-i(t+h)H}
    -e^{i(t+h)H}F_\epsilon(X)e^{-itH})\psi+ie^{itH}F_\epsilon(X)e^{-itH}H\psi\|\\
    &=\|F_\epsilon(X)\frac1h(e^{-i(t+h)H}
    -e^{-itH}\psi)+ie^{-ihH}F_\epsilon(X)e^{-itH}H\psi\|,
  \end{align*}
  which tends to $0$ as $h\to 0$ for $\psi\in H^2(\R)$ by strong continuity of $e^{itH}$ and since $\dom(H)=H^2(\R)$. For the second term, we note that a quick computation shows that $F_\epsilon(X)(H^2(\R))\subseteq H^2(\R)$, and we may compute again
  \begin{align*}
    \|1/h(e^{i(t+h)H}&F_\epsilon(X)e^{-itH}\psi
    -e^{itH}F_\epsilon(X)e^{-itH})\psi)-iHe^{itH}F_\epsilon(X)e^{-itH}\psi\|\\
    &\leq \|1/h(e^{ihH}-I)F_\epsilon(X)\psi-iHF_\epsilon(X)e^{-itH}\psi\|
  \end{align*}
  which again tends to $0$ as $h$ does.
  Thus, as a map from $H^2(\R)$ to $L^2(\R)$, $F_\epsilon(X(t))$ is strongly differentiable with
  \begin{align*}
    \frac{\dd}{\dd t}F_\epsilon(X(t))&=i[H,F_\epsilon(X(t))]=ie^{itH}[H_0,F_\epsilon(X)]e^{-itH}\\
    &=G_\epsilon(X(t))D(t)+D(t)G_\epsilon(X(t))
  \end{align*}
  where $G_\epsilon(y)=\partial_yF_\epsilon(y)=\frac{4y^3}{(1+\epsilon y^4)^2}$, and $G_\epsilon(X(t))=e^{itH}G_\epsilon(X)e^{-itH}$, $D(t)=-e^{itH}i\frac{\dd}{\dd x}e^{-itH}$ as before.

  Fixing $\psi\in H^2(\R)\cap D(X^2)$, we use the above to find
  \begin{align*}
  \frac{\dd}{\dd t}\langle \psi,(F_\epsilon(X(t))+1)\psi\rangle= 2\langle G_\epsilon(X(t))\psi,D(t)\psi\rangle
  \end{align*}
  where we note that
  $G_\epsilon(y)=4F_\epsilon(y)^{1/2}\frac{y}{(1+\epsilon y^4)^{3/2}}$ and so, setting $g_\epsilon(y)=\frac{y^2}{(1+\epsilon y^4)^{3}}$, we have 
  \begin{align}\label{eq:1june16}
  \frac{\dd}{\dd t}\langle \psi,(F_\epsilon(X(t))+1)\psi\rangle\leq  8\langle \psi,F_\epsilon(X(t))\psi\rangle^{1/2}\langle D(t)\psi,g_\epsilon(X(t))D(t)\psi\rangle^{1/2}
  \end{align}
  by the Cauchy-Schwarz inequality.

  We note that since $g_\epsilon\in H^1(\R)$, for $f\in H^1(\R)$, $g_\epsilon(X)Df=Dg_\epsilon(X)f+i(\partial_xg_\epsilon)(X)f$ by the product rule.
  Thus, since $e^{-itH}:H^2(\R)\to H^2(\R)\subset H^1(\R)$, we have
  \begin{align*}
  \langle D(t)\psi,g_\epsilon(X(t))D(t)\psi\rangle=
  \langle D(t)\psi,D(t)g_\epsilon(X(t))\psi\rangle+i\langle D(t)\psi,(\partial_xg_\epsilon)(X(t))\psi\rangle,
  \end{align*}
  and so by the Cauchy-Schwarz inequality again
  \begin{align*}
  \langle D(t)\psi,g_\epsilon(X(t))D(t)\psi\rangle\leq
  \left(\|H_0^{1/2}e^{-iHt}\psi\|^2+\|H_0e^{-iHt}\psi\|^2 \right)^{1/2}\left(\|g_\epsilon(X)e^{-iHt}\psi\|^2+\|(\partial_xg_\epsilon(X))e^{-iHt}\psi\|^2 \right)^{1/2}.
  \end{align*}
  We compute
  \begin{align*}
  \left|\frac{\partial_yg_\epsilon(y)}{2F_\epsilon(y)^{1/4}}\right|&=
  \left|\frac{(1+\epsilon y^4)^{1/4}}{2y}\right|\cdot\left|\frac{2y-10\epsilon y^5}{(1+\epsilon y^4)^4}\right|
  \\
  &=\left|\frac{1-5\epsilon y^4}{(1+\epsilon y^4)^{15/4}}\right|\leq \frac{1+5\epsilon y^4}{1+\epsilon y^4}\\
  &\leq 1+\frac{5\epsilon y^4}{1+\epsilon y^4}\leq 6;
  \end{align*}
  yielding $\partial_yg_\epsilon(y)^2\leq 144F_\epsilon(y)^{1/2}$. Thus,
  \begin{align*}
  \|(\partial_xg_\epsilon(X))e^{-iHt}\psi\|^2\leq 144\langle \psi,F_\epsilon(X(t))^{1/2}\psi\rangle.
  \end{align*}
  We also have $g_\epsilon(y)^2\leq F_\epsilon(y)$, which implies
  \begin{align*}
  \|g_\epsilon(X)e^{-iHt}\psi\|^2\leq \langle \psi, F_\epsilon(X(t))\psi\rangle,
  \end{align*}
  so that
  \begin{align*}
  \|(\partial_xg_\epsilon(X))e^{-iHt}\psi\|^2+\|g_\epsilon(X)e^{-iHt}\psi\|^2\leq 288\langle \psi, (F_\epsilon(X(t))+1)\psi\rangle,
  \end{align*}
  by the elementary inequality $(y+\sqrt{y})\leq 2(1+y)$.

  The inequality \eqref{eq:2june15} and the form bound \eqref{eq:1june15} yields
  \begin{align*}
  \|H_0^{1/2}e^{-iHt}\psi\|^2+\|H_0e^{-iHt}\psi\|^2&\leq (1+2R)\|\psi \|_{H^1(\R)}^2+2(1+2R)^2\|\psi\|_{H^2(\R)}^2\\
  &\leq 4(1+2R)^2\|\psi\|_{H^2(\R)}^2.
  \end{align*}

  Combining the above inequalities to estimate \eqref{eq:1june16}, we have, with $A(R)=2^{19/4}\sqrt{3}(1+2R)^{1/2}$,
  \begin{align*}
  &\frac{\dd}{\dd t}\langle \psi,(F_\epsilon(X(t))+1)\psi\rangle\leq  8\langle \psi,F_\epsilon(X(t))\psi\rangle^{1/2}\langle D(t)\psi,g_\epsilon(X(t))D(t)\psi\rangle^{1/2}\\
  &\leq  A(R)\|\psi\|_{H^2(\R)}^{1/2}\langle \psi,F_\epsilon(X(t))\psi\rangle^{1/2}
  \langle \psi, (F_\epsilon(X(t))+1)\psi\rangle^{1/4}\\
  &\leq A(R)\|\psi\|_{H^2(\R)}^{1/2}
  \langle \psi, (F_\epsilon(X(t))+1)\psi\rangle^{3/4},
  \end{align*}
  and integrating the inequality yields
  \begin{align*}
  \langle \psi,(F_\epsilon(X(t)))\psi\rangle^{1/4}&\leq
  \langle \psi,(F_\epsilon(X(t))+1)\psi\rangle^{1/4}\leq \langle \psi,F_\epsilon(0)\psi\rangle^{1/4}+\frac{A(R)}{4}\|\psi\|_{H^2(\R)}^{1/2}|t|.
  \end{align*}
  Taking $\epsilon\searrow 0$ in the above inequality and using the monotone convergence theorem, we have
  \begin{align*}
  \| X^2e^{-itH}\psi\|^{1/2}\leq \|X^2\psi\|^{1/2}+\frac{A(R)}{4}\|\psi\|_{H^2(\R)}^{1/2}|t|.
  \end{align*}
  Squaring and applying some elementary inequalities, \eqref{eq:radinsimon2} follows.
  \end{proof}

\bibliographystyle{amsplain}
\bibliography{lit}

\end{document}